\def\vbar{\mathchoice{\vrule height6.3ptdepth-.5ptwidth.8pt\kern-.8pt}
   {\vrule height6.3ptdepth-.5ptwidth.8pt\kern-.8pt}
   {\vrule height4.1ptdepth-.35ptwidth.6pt\kern-.6pt}
   {\vrule height3.1ptdepth-.25ptwidth.5pt\kern-.5pt}}
\def\fudge{\mathchoice{}{}{\mkern.5mu}{\mkern.8mu}}
\def\bbc#1#2{{\rm \mkern#2mu\vbar\mkern-#2mu#1}}
\def\bbb#1{{\rm I\mkern-3.5mu #1}}
\def\bba#1#2{{\rm #1\mkern-#2mu\fudge #1}}
\def\bb#1{{\count4=`#1 \advance\count4by-64 \ifcase\count4\or\bba
A{11.5}\or \bbb B\or\bbc C{5}\or\bbb D\or\bbb E\or\bbb F \or\bbc
G{5}\or\bbb H\or \bbb I\or\bbc J{3}\or\bbb K\or\bbb L \or\bbb
M\or\bbb N\or\bbc O{5} \or \bbb P\or\bbc Q{5}\or\bbb R\or\bbc
S{4.2}\or\bba T{10.5}\or\bbc U{5}\or \bba V{12}\or\bba
W{16.5}\or\bba X{11}\or\bba Y{11.7}\or\bba Z{7.5}\fi}}
\renewcommand{\Re}{\mathrm{Re\,}}
\def \n {\noindent}
\def \barr {\begin{array}{l}} 
\def \ear {\end{array}}
\def \beq {\begin{equation}} 
\def \eeq {\end{equation}}
\def \beqn {\begin{eqnarray}}
\def \eeqn {\end{eqnarray}}
\def \f {\end{document}}
\def\dfrac{\displaystyle\frac}
\newtheorem{lem}{Lemma}
\newtheorem{rem}{Remark}
\newtheorem{prop}{Proposition}
\newtheorem{theo}{Theorem}
\newcommand{\field}[1]{\mathbb{#1}}
\newcommand{\R}{\field{R}}
\newcommand{\N}{\field{N}}
\numberwithin{equation}{section}
\begin{document}

\title[Asymptotic analysis of a 2D overhead crane with input delays]{Asymptotic analysis of a 2D overhead crane with input delays in the boundary control}

\author{Fadhel Al-Musallam}
\address{Kuwait University, Faculty of Science, Department of Mathematics, Safat 13060, Kuwait}
\email{musallam@sci.kuniv.edu.kw}

\author{Ka\"{\i}s Ammari}
\address{UR Analysis and Control of PDEs, UR13ES64, Department of Mathematics, Faculty of Sciences of Monastir, University of Monastir, 5019 Monastir, Tunisia}
\email{kais.ammari@fsm.rnu.tn}

\author{Boumedi\`ene Chentouf}
\address{Kuwait University, Faculty of Science, Department of Mathematics, Safat 13060, Kuwait}
\email{chenboum@hotmail.com,chentouf@sci.kuniv.edu.kw}

\begin{abstract} 
The paper investigates the asymptotic behavior of a 2D overhead crane with
input delays in the boundary control. A linear boundary control is proposed.
The main feature of such a control lies in the facts that it solely depends on
the velocity but under the presence of time-delays. We end-up with a
closed-loop system where no displacement term is involved. It is shown that
the problem is well-posed in the sense of semigroups theory. LaSalle's
invariance principle is invoked in order to establish the asymptotic
convergence for the solutions of the system to a stationary position which
depends on the initial data. Using a resolvent method it is proved that the
convergence is indeed polynomial.
\end{abstract}   

\subjclass[2010]{34B05, 34D05, 70J25, 93D15}
\keywords{Overhead crane; boundary velocity control; time-delay; asymptotic behavior}

\maketitle
\tableofcontents

\thispagestyle{empty}


\section{Introduction}

\setcounter{equation}{0}

Overhead cranes are extensively utilized in a variety of industrial and
construction sites. Usually, it consists of a hoisting mechanism such as a
hoisting cable and a hook and a support mechanism like a girder (trolley) \cite{anm}. The
aim of using such cranes is to horizontally transport point-to-point a
suspended mass/load. It is well-known that cables possess the inherent
flexibility characteristics and can only develop tension \cite{anm}. Such natural
features inevitably cause deflection in transversal direction of the cable.
Furthermore, the suspended load is always subject to swings due to several
reasons. Thereby, the behavior of the overhead crane system with flexible
cable can generate complex system dynamics (see \cite{anm} for more details).

\noindent We shall consider in the present work an overhead crane system which
consists of a motorized platform of mass $m$ moving along an horizontal rail.
A flexible cable of length $\ell$, holding a load mass $M$, is attached to the
platform (see Fig. \ref{fi1}). Furthermore, it is assumed that:

\begin{quote}
(i) The cable is completely flexible and non-stretching.
\newline(ii) The
length of the cable is constant.
\newline(iii) Transversal and angular
displacements are small.
\newline(iv) Friction is neglected.
\newline(v) The
masses $m$ and $M$ are point masses.
\newline(vi) The angle of the cable with
respect to the vertical $x$-axis is small everywhere.
\end{quote}

\begin{figure}
\centering
\includegraphics[scale=0.5]{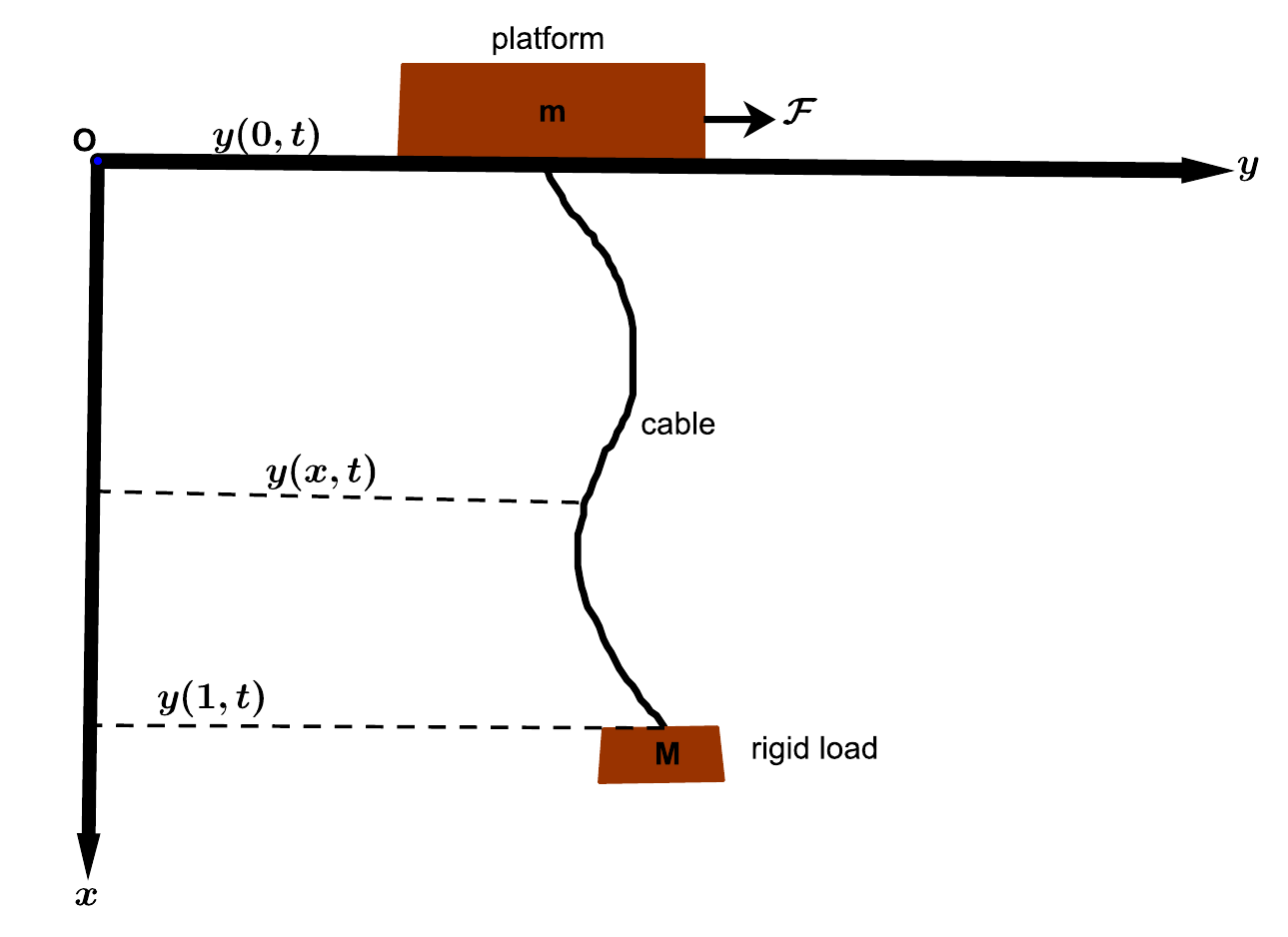}
\caption{The overhead crane model}
\label{fi1}      
\end{figure}

\noindent Under the above assumptions, the overhead crane is modeled by a
hybrid PDE-ODE system (see \cite{ANBCR} and \cite{Ra}). For sake of
completeness, we shall provide some details about the derivation of such a
model (the reader is referred to \cite{ANBCR} and \cite{Ra} for more details).

Let $T$ be the the tension of the cable, $\theta(x,t)$ be the angle between
$T$ and the $x$-axis, and consider a portion of the cable of length $\Delta x
$. Newton's law leads to%

\[
\Delta xy_{tt}(x,t)=T(x+\Delta x)\theta(x+\Delta x,t)-T(x)\theta(x,t).
\]
We can write $\theta(x,t)\simeq y_{x}(x,t)$ due to the assumption of smallness
of transversal and angular displacements. On the other hand, since the tension
of the cable is essentially due to the action on its lower part, we have
$|T(x)|=(M+\ell-x)g$, which is the modulus of tension of the cable and will be
denoted by $a(x)$. This, together with the above equation imply that
\begin{equation}%
\begin{array}
[c]{ll}
y_{tt}(x,t)-\left(  ay_{x}\right)  _{x}(x,t)=0, & 0<x<\ell,\;t>0.
\end{array}
\label{n1}%
\end{equation}

We turn now to the equation of the platform part of the system (see Fig. 2).
Taking into account the external controlling force ${\mathcal{F}}(t)$, we
have
\[
m y_{tt}(0,t) =|T(0)| \theta(0,t)+{\mathcal{F}}(t),
\]
which can be rewritten
\begin{equation}%
\begin{array}
[c]{ll}%
m y_{tt}(0,t) =a(0) y_{x}(0,t)+{\mathcal{F}}(t), & t>0,
\end{array}
\label{n2}%
\end{equation}
as $|T(x)|=a(x)$ and $\theta(0,t) \simeq y_{x}(0,t)$.

\noindent Using similar arguments for the the load mass (see Fig. \ref{fi3}),
we have
\begin{equation}%
\begin{array}
[c]{ll}%
My_{tt}(\ell,t)=-a(\ell)y_{x}(\ell,t), & t>0.
\end{array}
\label{n3}%
\end{equation}
Combining (\ref{n1})-(\ref{n3}), we have the system%
\begin{equation}
\left\{
\begin{array}
[c]{ll}%
y_{tt}(x,t)-\left(  ay_{x}\right)  _{x}(x,t)=0, & 0<x<\ell,\;t>0,\\
my_{tt}(0,t)-\left(  ay_{x}\right)  (0,t)={\mathcal{F}}(t), & t>0,\\
My_{tt}(\ell,t)+\left(  ay_{x}\right)  (\ell,t)=0, & t>0,\\
&
\end{array}
\right. \label{(1.1)}%
\end{equation}
where $a(x)$ is supposed to satisfy the following conditions

\begin{equation}
\left\{
\begin{array}
[c]{l}%
a\in H^{1}(0,\ell);\\
\text{there exists a positive constant}\;a_{0}\;\text{such that}\;a(x) \geq
a_{0}>0 \;\;\text{for all}\;\;x\in\lbrack0,\ell].
\end{array}
\right. 
\label{1.3}
\end{equation}

For simplicity and without loss of generality, we shall set the length $\ell=1$.

\begin{figure}
\centering
\includegraphics[scale=0.6]{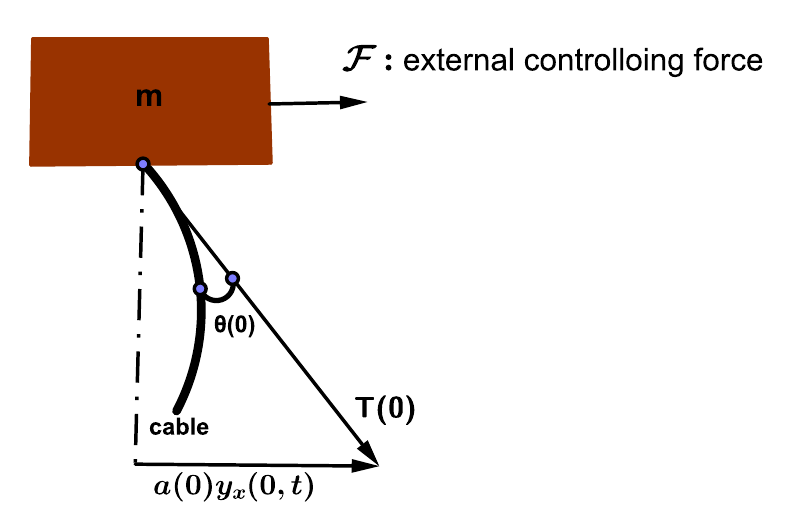}
\caption{The platform} 
\label{fi2}    
\end{figure} 

\begin{figure}
\centering
\includegraphics[scale=0.7]{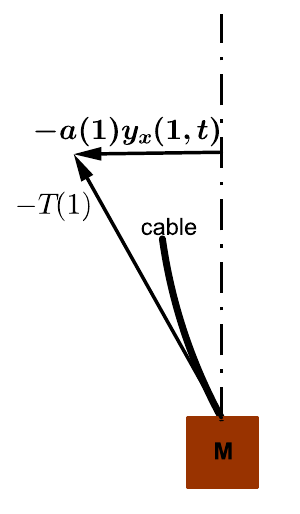} 
\caption{The load mass} 
\label{fi3}
\end{figure}

\n As mentioned above, the objective is to seek a delayed control ${\mathcal F}(t)$ depending solely on the velocity so that the solutions of the closed-loop system asymptotically converge to an equilibrium point in a suitable functional space.

The boundary stabilization of the system (\ref{(1.1)}) has been the object of
a considerable mathematical research. There are two categories of research
articles: in the first category, at least one of the dynamical terms in the
boundary conditions is neglected. In other words, either $my_{tt}(0,t)$ or
$My_{tt}(1,t)$ does not appear in the system or even both terms are not
present. For instance, it has been shown in \cite{Ra} that the feedback law
\[
{\mathcal{F}}(t)=-c y(0,t)-F(y_{t}(0,t)),\;c>0,
\]
exponentially stabilizes the system (\ref{(1.1)}) with $my_{tt}(0,t)=0$ under
appropriate assumptions on the function $F$. Another stabilization result for
the system (\ref{(1.1)}) with $my_{tt}(0,t)=My_{tt}(1,t)=0$ has also been
established in \cite{CR} via the action of the following feedback:
\[
\left\{
\begin{array}
[c]{l}%
{\mathcal{F}}(t)=-\alpha y(0,t)-F(y_{t}(0,t)),\\
{\mathcal{U}}(t)=-\alpha y(1,t)-F(y_{t}(1,t)),\;\;\alpha>0,
\end{array}
\right.
\]
where ${\mathcal{U}}$ is an additional control to be applied on the load mass.
In \cite{ANBCR}, the asymptotic stabilization has been proved as long as a
dynamical control is acting on the boundary $y(1,t)$. We also mention that a
stabilization result has been obtained in \cite{Cher} by proposing the
feedback law
\[
{\mathcal{F}}(t)=k_{p}y(0,t)+k_{v}y_{t}(0,t)+\displaystyle\int_{0}%
^{1}G(x)y(x,t)dx+\frac{k_{v}}{k_{p}}\int_{0}^{1}G(x)y_{t}(x,t)dx,
\]
with $k_{p},\;k_{v}>0$ and $G$ is a function in $H^{1}(0,1)$. Of course, such
a result has been established under some conditions on the feedback gains
$k_{p},\;k_{v}$ as well as the function $G$. Similar findings have been
obtained in \cite{AC} for other types of controls containing a displacement
term. We conclude this discussion about the first category of articles
available in the literature by pointing out that it has been noticed in
\cite{bc01} that in all references cited above, either the boundary conditions
in (\ref{(1.1)}) or the stabilizing feedback law ${\mathcal{F}}(t)$ involves
the displacement term $y$. This is mainly due to the fact that most of the
authors defined the energy-norm of the system by
\[
E_{0}(t)=\frac{1}{2}\int_{0}^{1}\left(  y_{x}^{2}+y_{t}^{2}\right)  dx.
\]
This observation has motivated the authors in \cite{bc01} to consider a
displacement term in the equation and propose a general class of feedback law
containing only the velocity. In fact, the closed-loop system in \cite{bc01}
has the following form
\begin{equation}
\left\{
\begin{array}
[c]{ll}%
y_{tt}(x,t)-\left(  ay_{x}\right)  _{x}(x,t)+\alpha y_{t}(x,t)+\beta
y(x,t)=0,\;\;\alpha\geq0,\;\beta>0, & 0<x<1,\;t>0,\\
\left(  ay_{x}\right)  (0,t)=\epsilon_{1}f(y_{t}(0,t)), & t>0,\\
\left(  ay_{x}\right)  (1,t)=\epsilon_{2}g(y_{t}(1,t)), & t>0,\label{(1.6)}%
\end{array}
\right.
\end{equation}
in which $f$ and $g$ are two nonlinear functions. The multiplier method has
been successfully used in \cite{bc01} to get precise decay rate (polynomial or
exponential) estimates of the energy of the system (\ref{(1.6)}) according to
the type of assumptions on the functions $f$ and $g$. Recently, the
back-stepping approach has been successfully applied to a variant of the
system (\ref{(1.1)}) leading to an exponentially stabilizing boundary feedback
controller \cite{AC}. In the same spirit, the following feedback law
\[
\left\{
\begin{array}
[c]{l}%
{\mathcal{F}}(t)=-\alpha_{1}y(0,t)-\beta_{1}y_{t}(0,t),\\
{\mathcal{U}}(t)=-\alpha_{2}y(1,t)-\beta_{2}y_{t}(1,t),
\end{array}
\right.
\]
has been suggested in \cite{gx} in the case where $\alpha_{1}+\alpha_{2}%
\not =0,\;\beta_{1}+\beta_{2}\not =0$ and $my_{tt}(0,t)=My_{tt}(1,t)=0$ and the
Riesz basis property has been shown.

The second category of research papers takes into consideration the dynamics
of both the load mass and platform mass. Within this context, it has been
proved in \cite{mif2} that the system (\ref{(1.1)}) can be strongly (but
non-uniformly) stabilized by means of the control
\[
{\mathcal{F}}(t)=-\alpha y(0,t)-f(y_{t}(0,t)),\;\alpha>0,
\]
where $f$ is a suitable function. This motivated several authors to
propose controls of higher orders to reach the uniform exponential stability.
Indeed, the uniform stabilization holds if
\[
{\mathcal{F}}(t)=-\alpha y(0,t)-\alpha\beta y_{t}(0,t)+\beta y_{xt}%
(0,t),\;\alpha>0,\,\beta>0.
\]
It turned out that the same result result can be achieved by the control
\[
{\mathcal{F}}(t)=-\alpha y(0,t)-(\beta+\alpha c )y_{t}(0,t)+c 
y_{xt}(0,t),
\]
where $\alpha,\beta$ and $c$ are positive constants satisfying
$\beta c < m.$ Motivated by the work of \cite{bc01}, a feedback control
depending only on the velocity has been proposed in \cite{cos} for the system
(\ref{(1.1)}) and an asymptotic convergence result has been established (see
also \cite{KZD}).

All the papers mentioned above do not take into consideration time-delay. In
turn, it is well-known that delays are inevitable in practice as they
naturally arises in most systems due to the time factor needed for the
communication among the controllers, the sensors and the actuators of systems
or in some cases due to the dependence of the state variables on past states.
Furthermore, it has been noticed that the presence of a delay in a system
could be a source of poor performance and instability \cite{d1}-\cite{d3} (see
also \cite{sha}\cite{KLSM}, \cite{KS} and \cite{KSe}).

\noindent The present work places primary emphasis on the analysis of the
system (\ref{(1.1)}) under the action of the following input delay 
\begin{equation}
{\mathcal F}(t)=-\beta y_t(0,t)+\alpha y_t (0,t-\tau),
\label{F}
\end{equation}
where $\beta>0$, $\alpha \in \R$ and $\tau >0$ is the time-delay. 

\noindent It is worth mentioning that the absence of the displacement term in
the closed-loop system prevents the applicability of classical Poincar\'{e}
inequalities. To overcome this difficulty, an appropriate energy-norm is suggested.

\noindent The main contribution of the present work is threefold:

\begin{enumerate}
\item[(a)] Extend the mathematical findings on the overhead crane available in
literature (specially those of \cite{mif1,mif2,bc01,cos}), where no delay has
been taken into account in the feedback laws.

\item[(b)] Show that despite the presence of the delay term in the proposed
feedback control law, the closed-loop system possesses the asymptotic
convergence property of its solutions to an equilibrium state which depends on
the initial conditions.

\item[(c)] Provide the rate of convergence of solutions of the closed-loop
system to the equilibrium state, in contrast to the work \cite{cos} where such
a result has not been achieved.
\end{enumerate}

The paper is organized as follows. The next section is devoted to the proof of
existence and uniqueness of the solutions to the closed-loop system. Section
\ref{sect3} deals with the asymptotic behavior of solutions via the use of
LaSalle's principle. Section \ref{sect4} is devoted to the polynomial
convergence of solutions. Finally, the paper closes with conclusions and discussions.


\section{Well-posedness of the system} \label{sect2}
\setcounter{equation}{0}

With the feedback law in (\ref{F}), we obtain the closed-loop system
\begin{equation}
\left\{
\begin{array}
[c]{ll}%
y_{tt}(x,t)-\left(  ay_{x}\right)  _{x}(x,t)=0, & 0<x<1,\;t>0,\\
my_{tt}(0,t)-\left(  ay_{x}\right)  (0,t)=-\beta y_{t}(0,t)+\alpha
y_{t}(0,t-\tau), & t>0,\\
My_{tt}(1,t)+\left(  ay_{x}\right)  (1,t)=0, & t>0,\label{1.6}%
\end{array}
\right.
\end{equation}
where $a$ obeys the condition (\ref{1.3}), $\alpha\in\mathbb{R}$ and $\beta>0$.

\noindent Our immediate task is to seek an appropriate energy associated to
(\ref{1.6}). To proceed, let
\begin{equation}
E_{0}(t)=\frac{1}{2}\left\{  \int_{0}^{1}\left(  y_{t}^{2}(x,t)+a(x)y_{x}%
^{2}(x,t)\right)  dx+my_{t}^{2}(0,t)+My_{t}^{2}(1,t)+K\tau\int_{0}^{1}%
y_{t}^{2}(0,t-x\tau)\;dx\right\}  ,\label{e0}%
\end{equation}
where $K$ is a positive constant. Using (\ref{1.6}) and integrating by parts,
a formal computation yields
\begin{equation}
E_{0}^{\prime}(t)=-\beta y_{t}^{2}(0,t)+\alpha y_{t}(0,t)y_{t}(0,t-\tau
)-\dfrac{K}{2}\left(  y_{t}^{2}(0,t-\tau)-y_{t}^{2}(0,t)\right)
.\label{(1.61)}%
\end{equation}
Applying Young's inequality, the latter becomes
\begin{equation}
E_{0}^{\prime}(t)\leq\left(  \dfrac{K}{2}+\dfrac{|\alpha|}{2c}-\beta\right)
y_{t}^{2}(0,t)+\dfrac{1}{2}\left(  |\alpha|c-K\right)  y_{t}^{2}%
(0,t-\tau),\label{1.6a}%
\end{equation}
for any positive constant $c$. Subsequently, we introduce the following
additional energy functional
\begin{equation}
E_{1}(t)=\frac{1}{2}\rho^{2}(t),\label{e1}%
\end{equation}
where
\begin{equation}
\rho(t)=\int_{0}^{1}y_{t}(x,t)dx+c_{1}y_{t}(0,t)+c_{2}y_{t}(1,t)+c_{3}\int
_{0}^{1}y_{t}(0,t-x\tau)\;dx+c_{4}y(0,t),\label{rho}%
\end{equation}
and $c_{1},c_{2},c_{3},$ and $c_{4}$ are constants to be determined. Following
the same arguments as for $E_{0}(t)$, we get
\begin{align}
E_{1}^{\prime}(t)  & =\rho(t)\Bigl((ay_{x})(1,t)\left[  1-\dfrac{c_{2}}%
{M}\right]  +(ay_{x})(0,t)\left[  \displaystyle\frac{c_{1}}{m}-1\right]
+\beta y_{t}(0,t)\left[  \displaystyle\frac{c_{3}}{\tau\beta}+\dfrac{c_{4}%
}{\beta}-\dfrac{c_{1}}{m}\right]  \Bigr.\nonumber\\
& \Bigl.\hspace{1.3cm}+\alpha y_{t}(0,t-\tau)\left[  \displaystyle\frac{c_{1}%
}{m}-\dfrac{c_{3}}{\tau\alpha}\right]  \Bigr).\label{e12}%
\end{align}
Thereafter, we define the total energy of the system (\ref{1.6}) as follows
\begin{equation}
{\mathcal{E}}(t)=E_{0}(t)+E_{1}(t).\label{ene}%
\end{equation}
This, together with (\ref{1.6a}) and (\ref{e12}), imply that
\begin{align}
{\mathcal{E}}^{\prime}(t)  & \leq\left(  \displaystyle\frac{K}{2}%
+\dfrac{|\alpha|}{2c}-\beta\right)  y_{t}^{2}(0,t)+\dfrac{1}{2}\left(
|\alpha|c-K\right)  y_{t}^{2}(0,t-\tau)\nonumber\\
& +\rho(t)\Bigl\{(ay_{x})(1,t)\left[  1-\dfrac{c_{2}}{M}\right]
+(ay_{x})(0,t)\left[  \displaystyle\frac{c_{1}}{m}-1\right]  +\beta
y_{t}(0,t)\left[  \displaystyle\frac{c_{3}}{\tau\beta}+\dfrac{c_{4}}{\beta
}-\dfrac{c_{1}}{m}\right]  \Bigr.\nonumber\\
& \Bigl.\hspace{1.3cm}+\alpha y_{t}(0,t-\tau)\left[  \displaystyle\frac{c_{1}%
}{m}-\dfrac{c_{3}}{\tau\alpha}\right]  \Bigr\}.\label{dec}%
\end{align}
In order to make the energy ${\mathcal{E}}(t)$ decreasing, we shall assume
that
\begin{equation}
|\alpha|<\beta,\label{sma}%
\end{equation}
and then choose $K$ such that
\begin{equation}
|\alpha|\leq K\leq 2\beta-|\alpha|,\label{uni}%
\end{equation}
whereas the other constants are
\begin{equation}
\left\{
\begin{array}
[c]{l}%
c=1,\\
c_{1}=m,\;c_{2}=M,\;c_{3}=\tau\alpha,\;c_{4}=\beta-\alpha.
\end{array}
\right. \label{uni2}%
\end{equation}
In light of (\ref{dec}) and (\ref{sma})-(\ref{uni2}), we deduce that
\begin{equation}
{\mathcal{E}}^{\prime}(t)\leq\displaystyle\frac{1}{2}\left\{  (-2\beta
+|\alpha|+K)y_{t}^{2}(0,t)+(|\alpha|-K)y_{t}^{2}(0,t-\tau)\right\}
\leq0,\label{sm}%
\end{equation}
and hence the energy ${\mathcal{E}}(t)$ is decreasing.

\begin{rem}
It is clear from the above choices in (\ref{uni2}), that the additional energy
$E_{1}(t)$ defined by (\ref{e1})-(\ref{rho}) is in fact constant. \label{zer}
\end{rem}

Here and elsewhere throughout the paper, we shall use the following definitions and notations for the Hilbert space $L^{2}(0,1)$ and the Sobolev space $H^{m}(0,1)$, more precisely
$$L^2 (0,1)= \left \{  v: (0,1) \rightarrow \R \;  \text{is measurable and} \; \int_0^{1} | v (x)|^2 \; dx < \infty \right \}$$
equipped  with its usual norm 
$$\| \varphi \|_{L^2 (0,1)}= \displaystyle \left( \int_0^{1} | v(x)|^2 \; dx \right)^{1/2},$$
and
$$H^m (0,1)= \left \{ g: (0,1) \rightarrow \R;  \; {g}^{(m)} \in  L^2 (0,1), \text{for} \; m \in \N \right \}$$
endowed with the standard norm
$$\| g  \|_{H^m (0,1)} = \displaystyle \sum_{i=0}^{i=m}  \displaystyle \| g^{(i)} \|_{L^2 (0,1)}.$$

Let us return now to our closed-loop system (\ref{1.6}). Using the well-known
change of variables \cite{da}
\begin{equation}
u(x,t)=y_{t}(0,t-x\tau),\label{ch}%
\end{equation}
the system (\ref{1.6}) becomes
\begin{equation}
\left\{
\begin{array}
[c]{ll}%
y_{tt}(x,t)-(ay_{x})_{x}(x,t)=0, & (x,t)\in(0,1)\times(0,\infty),\\
\tau u_{t}(x,t)+u_{x}(x,t)=0, & (x,t)\in(0,1)\times(0,\infty),\\[1mm]%
my_{tt}(0,t)-\left(  ay_{x}\right)  (0,t)=\alpha u(1,t)-\beta u(0,t), & t>0,\\
My_{tt}(1,t)+\left(  ay_{x}\right)  (1,t)=0, & t>0,\\
y(x,0)=y_{0}(x),\;y_{t}(x,0)=y_{1}(x), & x\in(0,1),\\
u(x,0)=y_{t}(0,-x\tau)=f(-x\tau), & x\in(0,1).
\end{array}
\right. \label{3}%
\end{equation}
Let $z(\cdot,t)=y_{t}(\cdot,t),\;\xi=y_{t}(0,t),\;\eta=y_{t}(1,t)$ and
consider the state variable $\Phi=(y,z,u,\xi,\eta).$ Then, our state space
${\mathcal{X}}$ is defined by
\[
{\mathcal{X}}=H^{1}(0,1)\times L^{2}(0,1)\times L^{2}(0,1)\times\mathbb{R}%
^{2},
\]
equipped with the following real inner product (the complex case is similar)
\begin{equation}%
\begin{array}
[c]{l}%
\langle(y,z,u,\xi,\eta),(\tilde{y},\tilde{z},\tilde{u},\tilde{\xi},\tilde
{\eta})\rangle_{\mathcal{X}}=\displaystyle\int_{0}^{1}\left(  ay_{x}\tilde
{y}_{x}+z\tilde{z}\right)  dx+K\tau\int_{0}^{1}u\tilde{u}\,dx+m\xi\tilde{\xi
}+M\eta\tilde{\eta}\\
+\displaystyle\varpi\left(  \int_{0}^{1}zdx+m\xi+M\eta+\mu y(0)+\tau
\alpha\int_{0}^{1}udx\right)  \left(  \int_{0}^{1}\tilde{z}dx+m\tilde{\xi
}+M\tilde{\eta}+\mu{\tilde{y}}(0)+\tau\alpha\int_{0}^{1}\tilde{u}dx\right)
\label{ip}%
\end{array}
\end{equation}
in which $K>0$ satisfies the condition (\ref{uni}), while $\mu=\beta
-\alpha$ and $\varpi$ is a positive constant to be determined. Note that
$\mu=\beta-\alpha$ is positive due to (\ref{sma}).

The first result is stated below.

\begin{prop}
Assume that (\ref{1.3}), (\ref{sma}) and (\ref{uni}) hold. Then, the state
space ${\mathcal{X}}$ endowed with the inner product (\ref{ip}) is a Hilbert
space provided that $\varpi$ is small enough. \label{p1}
\end{prop}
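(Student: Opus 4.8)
The plan is to exploit that $\mathcal{X}=H^{1}(0,1)\times L^{2}(0,1)\times L^{2}(0,1)\times\mathbb{R}^{2}$, equipped with its natural product norm $\|\cdot\|_{\sharp}$, is \emph{already} a Hilbert space, being a finite product of Hilbert spaces. It therefore suffices to check that (\ref{ip}) is a genuine inner product whose induced norm is equivalent to $\|\cdot\|_{\sharp}$; completeness then transfers automatically. Bilinearity and symmetry are immediate from the explicit algebraic expression in (\ref{ip}), so the entire content reduces to a two-sided estimate between $\|\Phi\|_{\mathcal{X}}^{2}=\langle\Phi,\Phi\rangle_{\mathcal{X}}$ and $\|\Phi\|_{\sharp}^{2}$. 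Throughout, I write $\Lambda(\Phi)=\int_{0}^{1}z\,dx+m\xi+M\eta+\mu y(0)+\tau\alpha\int_{0}^{1}u\,dx$, so that the last term in (\ref{ip}) is $\varpi\,\Lambda(\Phi)^{2}$, and recall that $\mu=\beta-\alpha>0$ by (\ref{sma}).

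For the upper bound I would bound each term of $\|\Phi\|_{\mathcal{X}}^{2}$ separately: $\int_{0}^{1}ay_{x}^{2}\,dx\le\|a\|_{L^{\infty}}\|y_{x}\|_{L^{2}}^{2}$ since $a\in H^{1}(0,1)\hookrightarrow L^{\infty}(0,1)$ by (\ref{1.3}); the remaining quadratic terms are trivially dominated by $\|\Phi\|_{\sharp}^{2}$; and $\Lambda(\Phi)^{2}$ is controlled using Cauchy--Schwarz on the integral terms together with the one-dimensional Sobolev embedding $H^{1}(0,1)\hookrightarrow C[0,1]$, which gives $|y(0)|\le C\|y\|_{H^{1}}$. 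This yields $\|\Phi\|_{\mathcal{X}}^{2}\le C_{2}\,\|\Phi\|_{\sharp}^{2}$ for every $\varpi>0$.

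The lower bound is the heart of the matter and the step I expect to be the main obstacle. The form manifestly controls $N_{0}:=\int_{0}^{1}ay_{x}^{2}\,dx+\|z\|_{L^{2}}^{2}+K\tau\|u\|_{L^{2}}^{2}+m\xi^{2}+M\eta^{2}$, and hence, using $a\ge a_{0}>0$ and $K,\tau>0$, it controls $\|y_{x}\|_{L^{2}}$, $\|z\|_{L^{2}}$, $\|u\|_{L^{2}}$, $|\xi|$ and $|\eta|$. The one quantity invisible to $N_{0}$ is the displacement norm $\|y\|_{L^{2}}$: no displacement term appears in $N_{0}$, which is precisely the absence-of-Poincar\'e difficulty flagged in the introduction. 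To recover it I would use the fundamental theorem of calculus in the form $\|y\|_{L^{2}}^{2}\le 2\,y(0)^{2}+2\|y_{x}\|_{L^{2}}^{2}$, so that everything comes down to estimating $y(0)^{2}$. Since $\mu>0$, I solve $\mu y(0)=\Lambda(\Phi)-B$ with $B:=\int_{0}^{1}z\,dx+m\xi+M\eta+\tau\alpha\int_{0}^{1}u\,dx$, and use the elementary inequality $\varpi\Lambda(\Phi)^{2}=\varpi(\mu y(0)+B)^{2}\ge\frac{\varpi\mu^{2}}{2}\,y(0)^{2}-\varpi B^{2}$ together with $B^{2}\le C_{0}N_{0}$ (Cauchy--Schwarz). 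This gives
\begin{equation*}
\|\Phi\|_{\mathcal{X}}^{2}\ge(1-\varpi C_{0})\,N_{0}+\frac{\varpi\mu^{2}}{2}\,y(0)^{2}.
\end{equation*}
Choosing $\varpi$ small enough that $\varpi C_{0}<1$ keeps the coefficient of $N_{0}$ strictly positive, and this is exactly the smallness requirement on $\varpi$ in the statement; the resulting inequality then bounds both $N_{0}$ and $y(0)^{2}$, hence all of $\|\Phi\|_{\sharp}^{2}$, by a multiple of $\|\Phi\|_{\mathcal{X}}^{2}$.

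Combining the two bounds shows that $\|\cdot\|_{\mathcal{X}}$ and $\|\cdot\|_{\sharp}$ are equivalent; in particular the lower bound yields positive-definiteness, so (\ref{ip}) is indeed an inner product, and equivalence to the complete norm $\|\cdot\|_{\sharp}$ yields completeness of $(\mathcal{X},\langle\cdot,\cdot\rangle_{\mathcal{X}})$. The only genuinely delicate point is the coercive recovery of the displacement $\|y\|_{L^{2}}$ from the single functional $\Lambda(\Phi)$, which is what forces both the special design of (\ref{ip}) (with $\mu y(0)$ built into $\Lambda$) and the smallness of $\varpi$; the rest is routine bookkeeping.
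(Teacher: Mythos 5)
Your proposal is correct and follows essentially the same route as the paper: both reduce the claim to a two-sided equivalence with the standard product norm of $H^{1}(0,1)\times L^{2}(0,1)\times L^{2}(0,1)\times\mathbb{R}^{2}$, and both recover $y(0)$ (hence $\|y\|_{L^{2}}$ via the fundamental theorem of calculus) from the term $\varpi\,\Lambda(\Phi)^{2}$ at the cost of a small multiple of the remaining quadratic terms, which is exactly where the smallness of $\varpi$ enters. The only difference is cosmetic: the paper runs the same estimate with a Young-inequality parameter $\kappa\in(0,\mu)$ and an auxiliary $\varepsilon$ in place of your fixed inequality $(\mu y(0)+B)^{2}\ge\tfrac{1}{2}\mu^{2}y(0)^{2}-B^{2}$.
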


\begin{proof}
It suffices to show the existence of two positive constants $A_{1}$ and
$A_{2}$ such that
\begin{equation}
A_{1}\Vert(y,z,u,\xi,\eta)\Vert\leq\Vert(y,z,u,\xi,\eta)\Vert
_{\scriptscriptstyle{\mathcal{X}}}\leq A_{2}\Vert(y,z,u,\xi,\eta
)\Vert,\label{in}%
\end{equation}
where $\Vert(y,z,u,\xi,\eta)\Vert$ denotes the usual norm of $H^{1}(0,1)\times
L^{2}(0,1)\times L^{2}(0,1)\times\mathbb{R}^{2}$, that is,
\[
\Vert(y,z,u,\xi,\eta)\Vert^{2}=\displaystyle\int_{0}^{1}\left(  y^{2}%
+y_{x}^{2}+z^{2}+u^{2}\right)  dx+\xi^{2}+\eta^{2}.
\]
The right-hand inequality $\Vert(y,z,u,\xi,\eta)\Vert
_{\scriptscriptstyle{\mathcal{X}}}\leq A_{2}\Vert(y,z,u,\xi,\eta)\Vert$ is
straightforward. Indeed, Young's and H\"{o}lder's inequalities yield
\[%
\begin{array}
[c]{l}%
\Vert(y,z,u,\xi,\eta)\Vert_{\scriptscriptstyle{\mathcal{X}}}^{2}%
\leq\displaystyle\int_{0}^{1}\left(  ay_{x}^{2}+z^{2}\right)  dx+K\tau\int
_{0}^{1}u^{2}\,dx+m\xi^{2}+M\eta^{2}\\
+\displaystyle5\varpi\left(  \int_{0}^{1}z^{2}dx+m^{2}\xi^{2}+M^{2}\eta
^{2}+\mu^{2}y^{2}(0)+\tau^{2}\alpha^{2}\int_{0}^{1}u^{2}dx\right)  .
\end{array}
\]
Moreover, by virtue of (\ref{1.3}) and the well-known trace continuity Theorem
\cite{ad}
\[
y^{2}(0)\leq2\int_{0}^{1}\left(  y^{2}+y_{x}^{2}\right)  dx,
\]
the above inequality leads to the desired result with $A_{2}$ depending on
$m,M,|\alpha|,\tau,\beta$ and $||a||_{\infty}$.

With regard to the other inequality of (\ref{in}), we proceed as follows:
\begin{align}
& \Vert(y,z,u,\xi,\eta)\Vert_{\scriptscriptstyle{\mathcal{X}}}^{2}%
=\displaystyle\int_{0}^{1}\left(  ay_{x}^{2}+z^{2}\right)  dx+K\tau\int
_{0}^{1}u^{2}\,dx+m\xi^{2}+M\eta^{2}\nonumber\\
& +\varpi\left(  \int_{0}^{1}z\,dx+\tau\alpha\int_{0}^{1}u\,dx+m\xi
+M\eta\right)  ^{2}+\varpi\mu^{2}y^{2}(0)\nonumber\\
& +2\varpi\mu y(0)\left[  \int_{0}^{1}z\,dx+\tau\alpha\int_{0}%
^{1}u\,dx+m\xi+M\eta\right]  .\label{6}%
\end{align}
It follows from Young's inequality that for any $\kappa>0$,
\begin{gather}
2y(0)\left[  \int_{0}^{1}z\,dx+\tau\alpha\int_{0}^{1}u\,dx+m\xi+M\eta\right]
\geq\label{7}\\
-\dfrac{4}{\kappa}\left(  \left[  \int_{0}^{1}z\,dx\right]  ^{2}+\tau
^{2}\alpha^{2}\left[  \int_{0}^{1}u\,dx\right]  ^{2}+m^{2}\xi^{2}+M^{2}%
\eta^{2}\right)  -\kappa y^{2}(0).\nonumber
\end{gather}
Combining (\ref{6}) and (\ref{7}), and choosing $\kappa<\mu=\beta-\alpha$,
we obtain
\begin{align}
& \Vert(y,z,u,\xi,\eta)\Vert_{\scriptscriptstyle{\mathcal{X}}}^{2}%
\geq\nonumber\\
& \displaystyle\int_{0}^{1}ay_{x}^{2}dx+\left[  1+4\varpi\left(
1-\dfrac{\mu}{\kappa}\right)  \right]  \int_{0}^{1}z^{2}dx+\tau\left[
K+4\varpi\tau\alpha^{2}\left(  1-\dfrac{\mu}{\kappa}\right)  \right]
\int_{0}^{1}u^{2}\,dx\nonumber\\
& +m\left[  1+4m\varpi\left(  1-\dfrac{\mu}{\kappa}\right)  \right]
\xi^{2}+M\left[  1+4M\varpi\left(  1-\dfrac{\mu}{\kappa}\right)  \right]
\eta^{2}+\varpi\mu(\mu-\kappa)y^{2}(0).\label{8}%
\end{align}
A direct computation gives
\begin{align}
\displaystyle\int_{0}^{1}y^{2}dx  & =y^{2}(0)+2\displaystyle\int_{0}^{1}%
\int_{0}^{x}yy_{s}\;ds\;dx\nonumber\\
& \leq y^{2}(0)+{\varepsilon}\int_{0}^{1}y^{2}dx+\frac{1}{\varepsilon}\int
_{0}^{1}y_{x}^{2}dx,\label{n2n}%
\end{align}
for any $\varepsilon>0.$ Inserting (\ref{n2n}) into (\ref{8}) and using
(\ref{1.3}) yields
\begin{align}
& \Vert(y,z,u,\xi,\eta)\Vert_{\scriptscriptstyle{\mathcal{X}}}^{2}%
\geq\displaystyle\left[  a_{0}-\varepsilon^{-1}\varpi\mu(\mu
-\kappa)\right]  \int_{0}^{1}y_{x}^{2}dx+\varpi\mu(\mu-\kappa
)(1-\varepsilon)\int_{0}^{1}y^{2}dx\nonumber\\
& +\left[  1+4\varpi\left(  1-\dfrac{\mu}{\kappa}\right)  \right]  \int
_{0}^{1}z^{2}dx+\tau\left[  K+4\varpi\tau\alpha^{2}\left(  1-\dfrac{\mu
}{\kappa}\right)  \right]  \int_{0}^{1}u^{2}\,dx\nonumber\\
& +m\left[  1+4m\varpi\left(  1-\dfrac{\mu}{\kappa}\right)  \right]
\xi^{2}+M\left[  1+4M\varpi\left(  1-\dfrac{\mu}{\kappa}\right)  \right]
\eta^{2}+\varpi\mu(\mu-\kappa)y^{2}(0),\label{98}%
\end{align}
for any $0<\kappa<\mu=\beta-\alpha$ and $0<\varepsilon<1$. Finally, we
choose $\varpi$ such that
\[
0<\varpi<\min\left\{  \frac{\varepsilon a_{0}}{\mu(\mu-\kappa)}%
,\delta,\frac{\delta}{m},\frac{\delta}{M},\frac{K\delta}{\tau\alpha^{2}%
},\right\}  ,
\]
where $\delta=\frac{\kappa}{4(\mu-\kappa)}>0.$ Thus, (\ref{in}) holds and
the proof of Proposition \ref{p1} is achieved.
\end{proof}

We are now in a position to set our problem in the state space ${\mathcal{X}}
$. Define a linear operator $\mathcal{A}$ by
\begin{equation}%
\begin{array}
[c]{l}%
{\mathcal{D}}(\mathcal{A})=\left\{  (y,z,u,\xi,\eta)\in{\mathcal{X}};y\in
H^{2}(0,1),\;\;z,u\in H^{1}(0,1),\;\;\xi=u(0)=z(0),\;\eta=z(1)\right\}  ,\\
\displaystyle{\mathcal{A}}(y,z,u,\xi,\eta)=\left(  z,(ay_{x})_{x},-\frac
{u_{x}}{\tau},\frac{1}{m}\left[  (ay_{x})(0)-\beta\xi+\alpha u(0)\right]
,-\dfrac{(ay_{x})(1)}{M}\right)  ,\\
\forall(y,z,u,\xi,\eta)\in{\mathcal{D}}(\mathcal{A}).\label{1.62}%
\end{array}
\end{equation}
The closed-loop system (\ref{1.6}) can now be formulated in terms of the
operator $\mathcal{A}$\ by the evolution equation over ${\mathcal{X}}$
\begin{equation}
\left\{
\begin{array}
[c]{l}%
\dot{\Phi}(t)=\mathcal{A}\Phi(t),\\
\Phi(0)=\Phi_{0},
\end{array}
\right. \label{si}%
\end{equation}
in which $\Phi=(y,z,u,\xi,\eta)$ and $\Phi_{0}=(y_{0},y_{1},f(-\tau\cdot
),\xi_{0},\eta_{0}).$

\noindent The well-posedness result is stated below.

\begin{theo}
Suppose that (\ref{1.3}), (\ref{sma}) and (\ref{uni}) are satisfied. Then, we have:

\noindent(i) The operator $\mathcal{A}$ defined by (\ref{1.62}) is densely
defined in ${\mathcal{X}}$ and generates on ${\mathcal{X}}$ a $C_{0}%
$-semigroup of contractions $e^{t\mathcal{A}}$. Moreover, $\sigma
(\mathcal{A})$, the spectrum of $\mathcal{A}$, consists of isolated
eigenvalues of finite algebraic multiplicity only.

\noindent(ii) For any initial condition $\Phi_{0}\in{\mathcal{X}}$, the system
(\ref{si}) has a unique mild solution $\Phi\in C([0,\infty);\mathcal{X})$. In
turn, if $\Phi_{0}\in{\mathcal{D}}(\mathcal{A})$, then necessarily the
solution $\Phi$ is strong and belongs to $C([0,\infty);{\mathcal{D}%
}(\mathcal{A})\cap C^{1}([0,\infty);\mathcal{X})$. \label{(t1)}
\end{theo}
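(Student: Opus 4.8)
The plan is to establish the well-posedness via the Lumer--Phillips theorem, so the core of the argument splits into three pieces: dissipativity of $\mathcal{A}$, the range (maximality) condition, and density of the domain. First I would verify that $\mathcal{A}$ is dissipative. The natural computation is to take $\Phi=(y,z,u,\xi,\eta)\in{\mathcal{D}}(\mathcal{A})$ and compute $\langle\mathcal{A}\Phi,\Phi\rangle_{\mathcal{X}}$ directly from the inner product \eqref{ip}, integrating by parts in the $\int_0^1 ay_x(z)_x\,dx$ term and using the transport term $-u_x/\tau$ together with the compatibility conditions $\xi=u(0)=z(0)$ and $\eta=z(1)$. Thanks to the choices \eqref{uni2} and Remark \ref{zer}, which say the cross term $E_1$ is constant, the $\varpi$-weighted part of the inner product should contribute nothing to the real part, and I expect to recover exactly the boundary dissipation appearing in \eqref{sm}, namely
\begin{equation}
\Re\langle\mathcal{A}\Phi,\Phi\rangle_{\mathcal{X}}\leq\frac{1}{2}\left\{(-2\beta+|\alpha|+K)\xi^{2}+(|\alpha|-K)u^{2}(1)\right\}\leq0,
\nonumber
\end{equation}
where the sign is guaranteed by \eqref{sma} and \eqref{uni}. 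This reuses the formal energy estimate already carried out in the excerpt, now made rigorous on the domain.

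Next I would prove maximality, i.e. that $I-\mathcal{A}$ (equivalently $\lambda I-\mathcal{A}$ for some $\lambda>0$) is surjective onto ${\mathcal{X}}$. Given $(f_1,f_2,f_3,f_4,f_5)\in{\mathcal{X}}$ I would solve $(I-\mathcal{A})\Phi=(f_1,\dots,f_5)$ componentwise. The equations $z=y-f_1$ and $\tau u_t$-transport relation $u+u_x/\tau=f_3$ are explicitly solvable: the transport equation is a first-order linear ODE in $x$ whose solution is $u(x)=u(0)e^{-\tau x}+\tau\int_0^x e^{-\tau(x-s)}f_3(s)\,ds$, with $u(0)=\xi$ fixed by the boundary coupling. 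Substituting $z=y-f_1$ reduces the problem to a second-order elliptic boundary value problem for $y\in H^2(0,1)$ of the form $y-(ay_x)_x=$ known data, with the boundary conditions at $x=0$ and $x=1$ inherited from the dynamics of $\xi$ and $\eta$. I would cast this as a variational problem on $H^1(0,1)$ and invoke Lax--Milgram: the associated bilinear form is coercive because of \eqref{1.3} (the $a(x)\geq a_0>0$ lower bound) and the presence of the zeroth-order term $y$, with the boundary terms contributing nonnegatively thanks to $\beta>0$. Elliptic regularity then lifts the weak solution to $H^2$, and back-substitution recovers $z,u\in H^1$ and the scalars $\xi,\eta$, verifying $\Phi\in{\mathcal{D}}(\mathcal{A})$.

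With dissipativity and maximality in hand, the Lumer--Phillips theorem yields that $\mathcal{A}$ generates a $C_0$-semigroup of contractions, and density of ${\mathcal{D}}(\mathcal{A})$ in ${\mathcal{X}}$ I would check separately by a standard approximation argument (smooth functions with the compatibility conditions are dense). This delivers part (ii) immediately: mild solutions for $\Phi_0\in{\mathcal{X}}$ and strong solutions in $C([0,\infty);{\mathcal{D}}(\mathcal{A}))\cap C^1([0,\infty);{\mathcal{X}})$ for $\Phi_0\in{\mathcal{D}}(\mathcal{A})$, by the general theory of semigroups. For the spectral claim in part (i), that $\sigma(\mathcal{A})$ consists only of isolated eigenvalues of finite algebraic multiplicity, I would show that $\mathcal{A}^{-1}$ (or some resolvent $(\lambda I-\mathcal{A})^{-1}$) is compact: this follows because solving the resolvent equation maps ${\mathcal{X}}$ into ${\mathcal{D}}(\mathcal{A})\subset H^{2}(0,1)\times H^{1}(0,1)\times H^{1}(0,1)\times{\mathbb R}^2$, which embeds compactly into ${\mathcal{X}}=H^{1}(0,1)\times L^{2}(0,1)\times L^{2}(0,1)\times{\mathbb R}^2$ by the Rellich--Kondrachov theorem. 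Compactness of the resolvent then forces the spectrum to be discrete with the stated multiplicity property.

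I expect the main obstacle to be the maximality step, specifically handling the transport component cleanly. The coupling of the delay variable $u$ (governed by the first-order transport equation with $u(0)=\xi$) into the boundary condition for the elliptic problem means the boundary data for $y$ depends on the still-unknown $\xi$, so the variational formulation must be set up so that the bilinear form remains coercive after this substitution; verifying coercivity with the delay-induced boundary terms, while keeping track of the constants $K,\beta,\alpha$, is where the care lies. The dissipativity computation, although conceptually routine, also requires attention because the non-standard inner product \eqref{ip} carries the $\varpi$-weighted quadratic term, and one must confirm that it genuinely drops out of the real part rather than producing an unexpected sign-indefinite contribution.
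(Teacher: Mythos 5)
Your proposal follows essentially the same route as the paper: Lumer--Phillips via dissipativity in the weighted inner product (\ref{ip}) (where the $\varpi$-term does indeed drop out, exactly as you anticipate), maximality by explicitly integrating the transport ODE for $u$ and reducing to an elliptic problem for $y$ solved by Lax--Milgram, compactness of the resolvent via Sobolev embedding for the spectral claim, and standard semigroup theory for part (ii). The only cosmetic differences are that the paper takes $\lambda>0$ large rather than $\lambda=1$ and obtains density of ${\mathcal{D}}(\mathcal{A})$ automatically from dissipativity plus surjectivity on the reflexive space ${\mathcal{X}}$ instead of a separate approximation argument.
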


\begin{proof}
Let $\Phi=(y,z,u,\xi,\eta)\in{\mathcal{D}}({\mathcal{A}}).$ Then, in light of
(\ref{ip}) and (\ref{1.62}), a simple integration by parts gives
\[
\langle{\mathcal{A}}\Phi,\Phi)\rangle_{{\mathcal{X}}}=\displaystyle(ay_{x}%
)(1)z(1)-(ay_{x})(0)z(0)-\dfrac{K}{2}(u^{2}(1)-u^{2}(0))+\xi(ay_{x}%
)(0)-\beta\xi^{2}+\alpha\xi u(1)
\]%
\[
-\eta(ay_{x})(1)+\varpi\left(  \int_{0}^{1}z\,dx+\tau\alpha\int_{0}%
^{1}u\,dx+m\xi+M\eta\right)  \left(  \alpha u(0)+\beta\xi+(\beta
-\alpha)z(0)\right)
\]%
\[
=\alpha\xi u(1)-\displaystyle\frac{K}{2}u^{2}(1)+\dfrac{K}{2}u^{2}(0)-\beta
\xi^{2}%
\]%
\begin{equation}
\leq\left(  -\beta+\dfrac{K+|\alpha|}{2}\right)  \xi^{2}+\displaystyle\frac
{|\alpha|-K}{2}u^{2}(1)\label{dis1}%
\end{equation}
and so the operator ${\mathcal{A}}$ is dissipative due to the assumption
(\ref{uni}).

Next, we claim that the operator $\lambda I-\mathcal{A}$ is onto $\mathcal{X}
$ for $\lambda>0$ sufficiently large. To ascertain the correctness of this
claim, one has to show that given $(f,g,v,p,q)\in{\mathcal{X}}$, there exists
$(y,z,u,\xi,\eta)\in{\mathcal{D}}(\mathcal{A})$ for which $(\lambda
I-\mathcal{A})(y,z,u,\xi,\eta)=(f,g,v,p,q)$. Although this can be considered
as a classical problem, one can easily verify that the latter is equivalent to
solve the following system
\begin{equation}
\left\{
\begin{array}
[c]{l}%
\lambda^{2}y-(ay_{x})_{x}=\lambda f+g,\\
u_{x}+\lambda\tau u=\tau v,\\
\lambda(m\lambda+\beta)y(0)-(ay_{x})(0)-\alpha u(0)=mp+(m\lambda+\beta)f(0),\\
\lambda^{2}My(1)+(ay_{x})(1)=Mq+\lambda Mf(1),\\
z=\lambda y-f,\\
\xi=u(0)=z(0)=\lambda y(0)-f(0),\\
\eta=z(1)=\lambda y(1)-f(1).
\end{array}
\right. \label{ra}%
\end{equation}
Solving the equation of $u$ in the above system, we obtain
\begin{equation}%
\begin{array}
[c]{l}%
\displaystyle u(x)=e^{-{\tau}\lambda x}(\lambda y(0)-f(0))+\displaystyle{\tau
}\int_{0}^{x}e^{-{\tau}\lambda(x-s)}v(s)\,ds,
\end{array}
\label{uu1}%
\end{equation}
and hence
\begin{equation}
\displaystyle u(1)=e^{-{\tau}\lambda}(\lambda y(0)-f(0))+\displaystyle{\tau
}\int_{0}^{1}e^{-{\tau}\lambda(1-s)}v(s)\,ds.\label{uu2}%
\end{equation}
This, together with (\ref{ra}) and (\ref{uu1}), imply that one has only to
seek $y\in H^{2}(0,1)$ satisfying
\begin{equation}
\left\{
\begin{array}
[c]{l}%
\lambda^{2}y-(ay_{x})_{x}=\lambda f+g,\\
\lambda\left[  (m\lambda+\beta)-\alpha e^{-{\tau}\lambda}\right]
y(0)-(ay_{x})(0)=mp+(m\lambda+\beta-\alpha e^{-{\tau}\lambda})f(0)\\
\hspace{6.9cm}+\displaystyle{\tau}\alpha\int_{0}^{1}e^{-{\tau}\lambda
(1-s)}v(s)\,ds,\\
\lambda^{2}My(1)+(ay_{x})(1)=Mq+\lambda Mf(1).
\end{array}
\right. \label{ra1}%
\end{equation}
Multiplying the first equation in (\ref{ra1}) by $\phi\in H^{1}(0,1)$, we get
the weak formulation
\[
\displaystyle\int_{0}^{1}\left(  \lambda^{2}\phi y+ay_{x}\phi_{x}\right)
dx+\lambda\left[  (m\lambda+\beta)-\alpha e^{-{\tau}\lambda}\right]
y(0)\phi(0)+\lambda^{2}My(1)\phi(1)
\]%
\[
=\int_{0}^{1}\left(  \lambda f+g\right)  \phi\,dx+\left[  mp+(m\lambda
+\beta-\alpha e^{-{\tau}\lambda})f(0)+\displaystyle{\tau}\alpha\int_{0}%
^{1}e^{-{\tau}\lambda(1-s)}v(s)\,ds\right]  \phi(0)
\]%
\begin{equation}
+\left[  Mq+\lambda Mf(1)\right]  \phi(1),\label{(1I)}%
\end{equation}
which in turn can be written in the form ${\mathcal{L}}(y,\phi)={\mathcal{M}%
}(\phi),$ where ${\mathcal{L}}$ is a bilinear form defined by
\[
{\mathcal{L}}:H^{1}(0,1)\times H^{1}(0,1)\longrightarrow\mathbb{R}%
\]
such that
\[
{\mathcal{L}}(y,\phi)=\displaystyle\int_{0}^{1}\left(  \lambda^{2}\phi
y+ay_{x}\phi_{x}\right)  dx+\lambda\left[  (m\lambda+\beta)-\alpha e^{-{\tau
}\lambda}\right]  y(0)\phi(0)+\lambda^{2}My(1)\phi(1),
\]
and ${\mathcal{M}}$ is a linear form given by
\[
{\mathcal{M}}:H^{1}(0,1)\longrightarrow\mathbb{R}%
\]%
\[
\displaystyle\phi\longmapsto{\mathcal{M}}(\phi)=\int_{0}^{1}\left(  \lambda
f+g\right)  \phi\,dx+
\]%
\[
\left[  mp+(m\lambda+\beta-\alpha e^{-{\tau}\lambda})f(0)+\displaystyle{\tau
}\alpha\int_{0}^{1}e^{-{\tau}\lambda(1-s)}v(s)\,ds\right]  \phi(0)
\]%
\[
+\left[  Mq+\lambda Mf(1)\right]  \phi(1).
\]
Applying Lax-Milgram Theorem \cite{br}, one can deduce the existence of a
unique solution $y\in H^{2}(0,1)$ of (\ref{ra1}) as long as $\lambda>0$ is
large. This establishes that the range of $\lambda I-\mathcal{A}$ is
${\mathcal{X}}$, for $\lambda>0$. Thus, according to semigroup theory
\cite{Pa:83}, the operator $\mathcal{A}$ is densely defined in ${\mathcal{X}}$
and generates on ${\mathcal{X}}$ a $C_{0}$-semigroup of contractions denoted
by $e^{t\mathcal{A}}$. As a direct consequence of the fact that, for
$\lambda>0$, the range of $\lambda I-\mathcal{A}$ is ${\mathcal{X}}$, it
follow that $\left(  \lambda I-{\mathcal{A}}\right)  ^{-1}$ exists and maps
${\mathcal{X}}$ into ${\mathcal{D}}({\mathcal{A}})$. Finally, using Sobolev
embedding \cite{ad}, if follows that $\left(  \lambda I-{\mathcal{A}}\right)
^{-1}$ is compact and hence the spectrum of $\mathcal{A}$, consists of
isolated eigenvalues of finite algebraic multiplicity only \cite{ka}. This
completes the proof of the first assertion (i) in Theorem \ref{(t1)}.

Concerning the proof of the second assertion, it suffices to use (i) and
invoke semigroups theory \cite{Pa:83}.
\end{proof}

\section{Asymptotic behavior}
\label{sect3}

We begin this section by recalling the following result.

\begin{theo}
\cite{HA} \label{sta} Let $P$ be the infinitesimal generator of a $C_{0}%
$-semigroup $S(t)$ in a Hilbert space $H$ such that $P$ has compact resolvent.
Then, $S(t)$ is strongly stable if and only if it is uniformly bounded and
$\Re\lambda<0$, for any $\lambda$ in the spectrum of $P$.
\end{theo}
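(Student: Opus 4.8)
The plan is to prove the two implications separately, treating the statement as a characterization of strong stability for a semigroup whose generator has compact resolvent.

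First I would dispatch the \emph{necessity} direction. Suppose $S(t)$ is strongly stable, so that $S(t)x \to 0$ for every $x \in H$. Each orbit $\{S(t)x : t \geq 0\}$ is then bounded, and since the family $\{S(t)\}_{t\geq 0}$ is pointwise bounded, the uniform boundedness principle yields $\sup_{t\geq 0}\|S(t)\| < \infty$, i.e. uniform boundedness. For the spectral condition, recall that since $P$ has compact resolvent its spectrum consists solely of isolated eigenvalues of finite multiplicity. If some $\lambda \in \sigma(P)$ had $\Re\lambda \geq 0$, I would pick a corresponding eigenvector $x \neq 0$, so that $S(t)x = e^{\lambda t}x$ and hence $\|S(t)x\| = e^{(\Re\lambda)t}\|x\| \geq \|x\| \not\to 0$, contradicting strong stability. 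Therefore $\Re\lambda < 0$ for every $\lambda \in \sigma(P)$.

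The substance of the theorem lies in the \emph{sufficiency} direction, and here my plan is to invoke the Arendt--Batty--Lyubich--Vu theorem. Assume $S(t)$ is uniformly bounded and $\Re\lambda < 0$ for all $\lambda \in \sigma(P)$. Because $P$ has compact resolvent, $\sigma(P)$ is a discrete set of eigenvalues, and the hypothesis forces $\sigma(P) \cap i\R = \emptyset$; in particular this intersection is (trivially) countable. It then remains only to verify that $P^*$ carries no eigenvalue on the imaginary axis, which is immediate since the adjoint of a compact-resolvent generator again has compact resolvent and $\sigma(P^*) = \overline{\sigma(P)}$ also lies in the open left half-plane. With a bounded semigroup, empty imaginary spectrum of $P$, and no imaginary point spectrum of $P^*$, the Arendt--Batty--Lyubich--Vu theorem delivers $S(t)x \to 0$ for all $x \in H$, which is exactly strong stability.

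I expect the sufficiency direction to be the main obstacle: passing from the purely spectral condition $\sigma(P) \cap i\R = \emptyset$ together with mere boundedness to the pointwise decay $S(t)x \to 0$ cannot be achieved by an elementary spectral-decomposition argument in infinite dimensions. It rests on the Tauberian and almost-periodicity machinery underlying the Arendt--Batty--Lyubich--Vu theorem, which is the deep analytic input that the cited reference supplies.
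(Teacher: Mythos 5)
The paper offers no proof of this statement: it is recalled verbatim from Haraux's monograph \cite{HA} and used as a black box, so there is nothing in the text to compare your argument against line by line. That said, your proof is correct. The necessity half is the standard one: continuity of orbits plus $S(t)x\to 0$ gives pointwise boundedness, the uniform boundedness principle gives $\sup_{t\ge 0}\|S(t)\|<\infty$, and since a compact resolvent forces $\sigma(P)=\sigma_p(P)$, any spectral point $\lambda$ with $\Re\lambda\ge 0$ would produce a non-decaying orbit $S(t)x=e^{\lambda t}x$. For sufficiency, your reduction to the Arendt--Batty--Lyubich--Vu theorem is sound: the hypothesis gives $\sigma(P)\cap i\R=\emptyset$ (hence countable), and $\sigma_p(P^{*})\cap i\R\subseteq\sigma(P^{*})\cap i\R=\overline{\sigma(P)}\cap i\R=\emptyset$, so all hypotheses of that theorem are met. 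The route in the cited source is genuinely different and closer in spirit to what the paper does in Section 3: for a uniformly bounded semigroup with compact resolvent, orbits issuing from $\mathcal{D}(P)$ are bounded in the graph norm and hence norm-precompact in $H$, and by density and uniform boundedness all orbits are precompact; one then invokes the invariance principle (equivalently, the Jacobs--Glicksberg--de Leeuw splitting), under which the non-decaying part of the dynamics is spanned by eigenvectors with purely imaginary eigenvalues, which the spectral hypothesis excludes. Your ABLV argument is shorter and more general (no precompactness is needed and it works on Banach spaces), whereas the precompactness-plus-invariance argument is the elementary, self-contained one that the cited reference supplies and that dovetails with the LaSalle machinery used elsewhere in the paper. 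Either way, the statement is correctly established.
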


It is clear from (\ref{1.62}) that $\lambda=0$ is an eigenvalue of
$\mathcal{A}$ whose eigenfunction is $(c,0,0,0,0)$, where $c\in\mathbb{R}%
\setminus\{0\}$. Thus, Theorem \ref{sta} implies that the semigroup
$e^{t\mathcal{A}}$ generated by $\mathcal{A}$ is not stable. However, we are
able to prove the main result of the paper which is stated next.

\begin{theo}
Assume that (\ref{1.3}), (\ref{sma}) holds and $K$ satisfies $|\alpha
|<K<2\beta-|\alpha|$. Then, for any initial data $\Phi_{0}=(y_{0},y_{1}%
,f,\xi_{0},\eta_{0})\in{\mathcal{X}}$, the solution $\Phi(t)=\biggl(y,y_{t}%
,y_{t}(0,t-x\tau),y_{t}(0,t),y_{t}(1,t)\biggr)$ of the closed-loop system
(\ref{1.6}) (or equivalently (\ref{si})) tends in ${\mathcal{X}}$ to
$(\Omega,0,0,0,0)$ as $t\longrightarrow+\infty$, where
\begin{equation}
\Omega=\displaystyle\frac{1}{\beta-\alpha}\displaystyle\left[  \int_{0}%
^{1}y_{1}dx+\alpha\tau\int_{0}^{1}f(-\tau x)dx+(\beta-\alpha)y_{0}(0)+m\xi
_{0}+M\eta_{0}\right]  .\label{cste}%
\end{equation}
\label{t2}
\end{theo}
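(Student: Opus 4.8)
The plan is to invoke LaSalle's invariance principle, and the argument splits into three tasks: establishing precompactness of trajectories, identifying the largest invariant set on which the dissipation vanishes, and pinning down the precise limit through the conservation law of Remark~\ref{zer}.

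First I would fix $\Phi_0\in\mathcal{D}(\mathcal{A})$ and show the orbit $\{e^{t\mathcal{A}}\Phi_0:t\ge0\}$ is relatively compact in $\mathcal{X}$. Since $e^{t\mathcal{A}}$ is a contraction (Theorem~\ref{(t1)}) and $\frac{d}{dt}e^{t\mathcal{A}}\Phi_0=e^{t\mathcal{A}}\mathcal{A}\Phi_0$, the orbit stays bounded in the graph norm; as $\mathcal{A}$ has compact resolvent the embedding $\mathcal{D}(\mathcal{A})\hookrightarrow\mathcal{X}$ is compact, so the orbit is precompact. Hence the $\omega$-limit set $\omega(\Phi_0)$ is nonempty, compact and invariant. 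Moreover $t\mapsto\|e^{t\mathcal{A}}\Phi_0\|_{\mathcal{X}}^2$ is nonincreasing, its derivative being $2\langle\mathcal{A}\Phi,\Phi\rangle_{\mathcal{X}}\le0$ by (\ref{dis1}); being bounded below it converges, so this norm is constant on $\omega(\Phi_0)$ and every solution lying in $\omega(\Phi_0)$ satisfies $\langle\mathcal{A}\Phi(t),\Phi(t)\rangle_{\mathcal{X}}\equiv0$.

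The crux of the proof, and the step I expect to be the main obstacle, is to show that the largest invariant set $\mathcal{S}$ with zero dissipation coincides with $\ker\mathcal{A}=\{(c,0,0,0,0):c\in\mathbb{R}\}$. By (\ref{dis1}) and the \emph{strict} inequalities $|\alpha|<K<2\beta-|\alpha|$, the relation $\langle\mathcal{A}\Phi,\Phi\rangle_{\mathcal{X}}=0$ forces $\xi=y_t(0,t)=0$ and $u(1,t)=y_t(0,t-\tau)=0$ along the whole trajectory. Invariance then propagates $\xi\equiv0$ into $u(\cdot,t)\equiv0$, while the platform equation in (\ref{1.6}) gives $(ay_x)(0,t)=my_{tt}(0,t)=0$ for all $t$. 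Thus $w=y_t$ solves $w_{tt}=(aw_x)_x$ with the overdetermined data $w(0,t)=0$ and $(aw_x)(0,t)=0$; I would conclude $w\equiv0$ by Holmgren's unique continuation theorem, or equivalently by reducing $\mathcal{S}$ (a space on which the flow is isometric and, by the compact resolvent, spanned by eigenvectors) to the eigenvalue problem $(ay_x)_x=-\omega^2y$ with $y(0)=y_x(0)=0$, whose only solution is $y\equiv0$ by ODE uniqueness (here $a\in H^1$, $a\ge a_0>0$ from (\ref{1.3})). Either way $y_t\equiv0$ on $\mathcal{S}$, hence $(ay_x)_x=0$ with $(ay_x)(0)=0$ yields $y\equiv\text{const}$, so $\mathcal{S}=\ker\mathcal{A}$ and $\omega(\Phi_0)\subseteq\ker\mathcal{A}$.

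It remains to identify the scalar limit, and here the conservation law is decisive: under the choices (\ref{uni2}) the time-derivative of the functional $\rho(t)$ in (\ref{rho}) vanishes (this is the content of Remark~\ref{zer}), so $\rho$ is constant along every solution. Evaluating at $t=0$ gives $\rho(0)=\int_0^1y_1\,dx+m\xi_0+M\eta_0+\tau\alpha\int_0^1 f(-\tau x)\,dx+(\beta-\alpha)y_0(0)$, whereas along the convergent trajectory $y_t,\,\xi,\,\eta,\,u\to0$ and, by trace continuity, $y(0,t)\to\Omega$, so $\rho(\infty)=(\beta-\alpha)\Omega$. Equating the two values and dividing by $\mu=\beta-\alpha>0$ gives precisely (\ref{cste}); in particular $\omega(\Phi_0)$ reduces to the single point $(\Omega,0,0,0,0)$ and $e^{t\mathcal{A}}\Phi_0\to(\Omega,0,0,0,0)$. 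Finally I would drop the hypothesis $\Phi_0\in\mathcal{D}(\mathcal{A})$: since $\mathcal{D}(\mathcal{A})$ is dense, $e^{t\mathcal{A}}$ is a contraction, and $\Phi_0\mapsto\Omega(\Phi_0)$ is a bounded linear functional, a standard $3\varepsilon$-approximation uniform in $t$ extends the convergence to every $\Phi_0\in\mathcal{X}$.
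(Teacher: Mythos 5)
Your proposal is correct and follows essentially the same route as the paper: LaSalle's principle with precompactness from the compact resolvent, the dissipation identity (\ref{dis1}) forcing $\xi=0$ and $u(1)=0$ on the $\omega$-limit set, reduction to an overdetermined wave system whose only solution is constant, and identification of $\Omega$ via the conserved functional $\rho$ of Remark \ref{zer}. The only differences are that you supply an explicit unique-continuation argument (Holmgren, or the spectral reduction to the ODE $(ay_x)_x=-\omega^2 y$ with $y(0)=y_x(0)=0$) where the paper defers to \cite{Ra} and \cite{mif2}, and you make explicit that $\Phi_0\mapsto\Omega(\Phi_0)$ is a bounded linear functional, which is the detail needed for the density step.
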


\begin{proof}
The proof depends on an essential way on the application of LaSalle's
invariance principle \cite{HA}. Using a standard argument of density of
${\mathcal{D}}({\mathcal{A}})$ in $\mathcal{X}$ and the contraction of the
semigroup $e^{t\mathcal{A}}$, it suffices to prove Theorem \ref{t2} for smooth
initial data $\Phi_{0}=\left(  y_{0},y_{1},f,\xi_{0},\eta_{0}\right)
\in{\mathcal{D}}({\mathcal{A}})$. Let $\Phi(t)=\left(  y(t),y_{t}%
(t),u(t),\xi(t),\eta(t)\right)  =e^{t\mathcal{A}}\Phi_{0}$ be the solution of
(\ref{1.6}). It follows from Theorem \ref{(t1)} that the trajectories set of
solutions $\{\Phi(t)\}_{\scriptscriptstyle t\geq0}$ is a bounded for the graph
norm and thus precompact by virtue of the compactness of the operator $\left(
I-\mathcal{A}\right)  ^{-1}$. Invoking LaSalle's principle, we deduce that
$\omega\left(  \Phi_{0}\right)  $ is non empty, compact, invariant under the
semigroup $e^{t\mathcal{A}}$ and in addition $e^{t\mathcal{A}}\Phi
_{0}\longrightarrow\omega\left(  \Phi_{0}\right)  \;$ as $t\rightarrow
\infty\,$ \cite{HA}. Clearly, in order to prove the convergence result, it
suffices to show that $\omega\left(  \Phi_{0}\right)  $ reduces to $({\Omega
},0,0,0)$. To this end, let $\tilde{\Phi}_{0}=\left(  \tilde{y}_{0},\tilde
{y}_{1},\tilde{f},\tilde{\xi},\tilde{\eta}\right)  \in\omega\left(  \Phi
_{0}\right)  \subset{D}(\mathcal{A})$ and consider $\tilde{\Phi}(t)=\left(
\tilde{y}(t),\tilde{y}_{t}(t),\tilde{u}(t),\tilde{\xi}(t),\tilde{\eta
}(t)\right)  =e^{t\mathcal{A}}\tilde{\Phi}_{0}\in{D}(\mathcal{A})$ as the
unique strong solution of (\ref{si}). It is well-known that $\Vert\tilde{\Phi
}(t)\Vert_{\mathcal{X}}$ is constant \cite{HA} and thus $\frac{{\textstyle d}%
}{\textstyle dt}\left(  \Vert\tilde{\Phi}(t)\Vert_{\mathcal{X}}^{2}\right)
=0$. This leads to
\begin{equation}
<\mathcal{A}\tilde{\Phi},\tilde{\Phi}>_{\mathcal{X}}=0\label{e1n}%
\end{equation}
which, together with (\ref{dis1}), imply that $\tilde{\xi}=\tilde{y}%
_{t}(0,t)=0$ and $\tilde{u}(1)=\tilde{y}_{t}(0,t-\tau)=0$. Consequently,
$\tilde{y}$ is a solution of the system
\begin{equation}
\left\{
\begin{array}
[c]{ll}%
\tilde{y}_{tt}-(a\tilde{y}_{x})_{x}=0, & (x,t)\in(0,1)\times(0,\infty),\\
M\tilde{y}_{tt}(1,t)+(a\tilde{y}_{x})(1,t)=0, & t>0,\\
\tilde{y}_{t}(0,t)=\tilde{y}_{x}(0,t)=0, & t>0,\\
\tilde{y}(0)=\tilde{y}_{0};\,\tilde{y}_{t}(0)=\tilde{y}_{1}, & x\in(0,1)\\
\tilde{y}\in H^{2}(0,1). &
\end{array}
\right. \label{e2n}%
\end{equation}
A straightforward computation shows that $\tilde{z}=\tilde{y}_{t}$ is a
solution of
\begin{equation}
\left\{
\begin{array}
[c]{ll}%
\tilde{z}_{tt}-(a\tilde{z}_{x})_{x}=0, & (x,t)\in(0,1)\times(0,\infty),\\
M\tilde{z}_{tt}(1,t)+(a\tilde{z}_{x})(1,t)=0, & t>0,\\
\tilde{z}(0,t)=\tilde{z}_{x}(0,t)=0, & t>0,\\
\tilde{z}(0)=\tilde{y}_{1};\,\tilde{z}_{t}(0)=(a\tilde{y_{0}}_{x})_{x}, &
x\in(0,1).
\end{array}
\right. \label{e3n}%
\end{equation}
The problem (\ref{e3n}) admits only the trivial solution $\tilde{z}=0$. The
arguments used to prove this run on much the same lines as in \cite{Ra} (see
also \cite{mif2}). Consequently, the unique solution of (\ref{e2n}),
$\tilde{y}$, is constant. To summarize, we have shown that for any
$\tilde{\Phi}_{0}=\left(  \tilde{y}_{0},\tilde{y}_{1},\tilde{f},\tilde{\xi
},\tilde{\eta}\right)  \in\omega\left(  \Phi_{0}\right)  \subset
{D}({\mathcal{A}})$, the unique solution $\tilde{\Phi}(t)=\left(  \tilde
{y}(t),\tilde{y}_{t}(t),\tilde{u}(t),\tilde{\xi}(t),\tilde{\eta}(t)\right)
=e^{t\mathcal{A}}\tilde{\Phi}_{0}\in{D}(\mathcal{A})$ is actually
$(\Omega,0,0,0,0)$, for any $t\geq0$, where $\Omega$ is a constant to be
determined. This implies that the initial condition $\tilde{\Phi}_{0}=\left(
\tilde{y}_{0},\tilde{y}_{1},\tilde{f},\tilde{\xi},\tilde{\eta}\right)  $ is
also equal to $(\Omega,0,0,0,0)$. Thereby, the $\omega$-limit set
$\omega\left(  \Phi_{0}\right)  $ only consists of constants $(\Omega
,0,0,0,0)$. It remains to provide an explicit expression of the constant
$\Omega$ to complete the proof. To do so, let $(\Omega,0,0,0,0)\in
\omega\left(  \Phi_{0}\right)  $. This implies that there exists $\left\{
t_{n}\right\}  \rightarrow\infty$, as $n\rightarrow\infty$ such that
\begin{equation}
\Phi(t_{n})=(y(t_{n}),y_{t}(t_{n}),y_{t}(t_{n}),u(t_{n}),\xi(t_{n}),\eta
(t_{n}))=e^{t_{n}\mathcal{A}}\Phi_{0}\longrightarrow(\Omega
,0,0,0,0)\label{boum}%
\end{equation}
in the state space $\mathcal{X}$. Furthermore, in view to Remark \ref{zer},
any solution of the closed-loop system (\ref{si}) stemmed from $\Phi
_{0}=(y_{0},y_{1},f,\xi_{0},\eta_{0})$ verifies
\[
\int_{0}^{1}y_{t}(x,t)\,dx+my_{t}(0,t)+My_{t}(1,t)+\alpha\tau\int_{0}^{1}%
y_{t}(0,t-x\tau)\,dx+(\beta-\alpha)y(0,t)=\Upsilon,\;\forall t\geq0,
\]
in which $\Upsilon$ is a constant. Obviously, such a constant can be obtained
by taking $t=0$ in the left-hand side of the last equation. Therefore, we
have
\begin{align}
& \int_{0}^{1}y_{t}(x,t)\,dx+my_{t}(0,t)+My_{t}(1,t)+\alpha\tau\int_{0}%
^{1}y_{t}(0,t-x\tau)\,dx+(\beta-\alpha)y(0,t)\nonumber\\
& =\int_{0}^{1}y_{t}(x,0)\,dx+my_{t}(0,0)+My_{t}(1,0)+\alpha\tau\int_{0}%
^{1}y_{t}(0,-x\tau)\,dx+(\beta-\alpha)y(0,0)\nonumber\\
& =\int_{0}^{1}y_{1}(x)\,dx+m\xi_{0}+M\eta_{0}+\alpha\tau\int_{0}^{1}%
f(-x\tau)\,dx+(\beta-\alpha)y_{0}(0).\label{bou}%
\end{align}
Lastly, letting $t=t_{n}$ in (\ref{bou}) with $n\rightarrow\infty$ and using
(\ref{boum}) yield the desired expression of $\Omega$. This achieves the proof
of the theorem.
\end{proof}

\section{Polynomial convergence}

\label{sect4}

The objective of this section is to show that the convergence result obtained
in the previous section is in fact polynomial. The proof of such a desired
result is based on applying the following frequency domain theorem for
polynomial stability of a $C_{0}$ semigroup of contractions on a Hilbert space
\cite{tomilov}:

\begin{theo}
\label{lemraokv} A $C_{0}$ semigroup $e^{t{\mathcal{L}}}$ of contractions on a
Hilbert space ${\mathcal{H}}$ satisfies, for all $t>0$,
\[
||e^{t{\mathcal{L}}}||_{{\mathcal{L}}(\mathcal{D}(\mathcal{A}),{\mathcal{H}}%
)}\leq\frac{C}{t^{1/\delta}}%
\]
for some constant $C,\delta>0$ if and only if
\begin{equation}
\rho({\mathcal{L}})\supset\bigr\{i\gamma\bigm|\gamma\in\mathbb{R}\bigr\}\equiv
i\mathbb{R},\label{1.8wkv}%
\end{equation}
and
\begin{equation}
\limsup_{|\gamma|\rightarrow\infty}\Vert|\gamma|^{\delta}\,(i\gamma
I-{\mathcal{L}})^{-1}\Vert_{{\mathcal{L}}({\mathcal{H}})}<\infty,\label{1.9kv}%
\end{equation}
where $\rho({\mathcal{L}})$ denotes the resolvent set of the operator
${\mathcal{L}}$.
\end{theo}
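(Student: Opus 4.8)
The statement is the Borichev--Tomilov characterisation of polynomial decay, a purely abstract spectral result; accordingly the plan is to reproduce the structure of that proof rather than to use anything specific to the operator at hand. The one feature that must be used in an essential way is that $\mathcal{H}$ is a Hilbert space, since the sole bridge between the resolvent (frequency) side and the semigroup (time) side is the Plancherel theorem, and the equivalence is known to fail on general Banach spaces. Throughout I write $R(\lambda)=(\lambda I-\mathcal{L})^{-1}$, and I read the norm $\|\cdot\|_{\mathcal{L}(\mathcal{D}(\mathcal{A}),\mathcal{H})}$ as the operator norm computed with the graph norm $\|x\|_{\mathcal{D}(\mathcal{L})}=\|x\|_{\mathcal{H}}+\|\mathcal{L}x\|_{\mathcal{H}}$ on the source, so that the decay estimate reads $\|e^{t\mathcal{L}}x\|_{\mathcal{H}}\le C\,t^{-1/\delta}\,\|x\|_{\mathcal{D}(\mathcal{L})}$ for every $x\in\mathcal{D}(\mathcal{L})$.

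First I would settle the easy (necessity) direction, that the decay estimate forces (\ref{1.8wkv}) and (\ref{1.9kv}). For $\Re\lambda>0$ the contraction semigroup gives the Laplace representation $R(\lambda)=\int_0^\infty e^{-\lambda t}e^{t\mathcal{L}}\,dt$. Splitting this integral at $t\sim|\gamma|^{-1}$ and balancing the trivial contraction bound $\|e^{t\mathcal{L}}\|\le 1$ on the short-time part against the decay bound $\|e^{t\mathcal{L}}x\|\le C t^{-1/\delta}\|x\|_{\mathcal{D}(\mathcal{L})}$ on the long-time part, one sees that the resulting expression stays bounded as $\Re\lambda\downarrow 0$. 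This simultaneously yields $i\mathbb{R}\subset\rho(\mathcal{L})$, i.e.\ (\ref{1.8wkv}), and, after optimising the split point, the polynomial resolvent bound (\ref{1.9kv}) with the same exponent $\delta$. This direction is the less useful one for applications and is routine.

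The substance of the theorem is the converse (sufficiency) direction, where the resolvent conditions imply decay. For $x\in\mathcal{D}(\mathcal{L})$ the map $\lambda\mapsto R(\lambda)x$ is holomorphic and bounded on $\{\Re\lambda>0\}$ and, by (\ref{1.8wkv}), extends continuously to $i\mathbb{R}$ with the controlled growth furnished by (\ref{1.9kv}). The scheme is: (i) represent the semigroup by a contour integral $e^{t\mathcal{L}}x=\frac{1}{2\pi i}\int_{\Gamma}e^{\lambda t}R(\lambda)x\,d\lambda$ and push $\Gamma$ up against $i\mathbb{R}$, which is legitimate precisely because of (\ref{1.8wkv}); (ii) convert the pointwise-in-$\gamma$ resolvent bound into an $L^2$ estimate in $\gamma$ by the Plancherel theorem on $\mathcal{H}$, the step that forces the Hilbert-space hypothesis; (iii) split the frequency variable into a low band $|\gamma|\le\Lambda(t)$ and a high band, use the extra smoothing encoded in $x\in\mathcal{D}(\mathcal{L})$, which via the resolvent identity $R(i\gamma)\mathcal{L}^{-1}=\tfrac{1}{i\gamma}\bigl(R(i\gamma)+\mathcal{L}^{-1}\bigr)$ supplies a gain of one power of $|\gamma|^{-1}$, together with (\ref{1.9kv}) on each band, and then optimise the cut-off $\Lambda(t)$ in $t$. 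Carrying out (iii) produces exactly the rate $t^{-1/\delta}$.

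The delicate point, and the reason the argument is more than a rerun of the Gearhart--Pr\"uss/Huang exponential-stability theorem, is step (iii): there is no uniform spectral gap, so one cannot shift the contour a fixed distance into $\{\Re\lambda<0\}$ to pick up an exponential factor. Instead the decay must be extracted from the interplay between the resolvent growth at high frequency and the regularising factor coming from $\mathcal{D}(\mathcal{L})$, and the Hilbert-space Plancherel identity is what keeps the frequency-band sums under control with no logarithmic loss. Making the band-splitting and optimisation quantitative, and checking that the constants are uniform in $t$, is the heart of the proof; the remaining bookkeeping (density of $\mathcal{D}(\mathcal{L})$ and passage back to the operator-norm statement) is routine. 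Since this is precisely the polynomial stability theorem of \cite{tomilov}, in the paper I would simply invoke the citation; the sketch above indicates how the proof could be reconstructed if required.
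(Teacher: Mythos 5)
The paper offers no proof of this statement at all: it is quoted verbatim as the Borichev--Tomilov polynomial stability theorem and justified solely by the citation to \cite{tomilov}, which is exactly what you conclude by saying you would ``simply invoke the citation.'' Your accompanying sketch of the necessity direction (Laplace transform plus splitting) and the sufficiency direction (contour representation, Plancherel on the Hilbert space, frequency-band splitting with the gain from $x\in\mathcal{D}(\mathcal{L})$) is a fair description of how the cited result is established, so the proposal is correct and takes essentially the same approach as the paper.
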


In order to use the above theorem, let us first consider the space
\[
\dot{\mathcal{X}} = \left\{ (y,z,u,\xi,\eta) \in\mathcal{X}; \, \int_{0}^{1}
z(x) dx +\alpha\tau\int_{0}^{1} u(x) dx + (\beta-\alpha) \, y(0) + m \xi+ M
\eta= 0 \right\} .
\]
Then, a new operator is defined below
\[
\dot{\mathcal{A}} : \mathcal{D}(\dot{\mathcal{A}}) := \mathcal{D}(\mathcal{A})
\cap\dot{\mathcal{X}} \subset\dot{\mathcal{X}} \rightarrow\dot{\mathcal{X}},
\]
\begin{equation}
\label{1.62bis}\dot{\mathcal{A}} (y,z,u,\xi,\eta) = \mathcal{A} (y,z,u,\xi
,\eta), \, \forall\, (y,z,u,\xi,\eta) \in\mathcal{D}(\dot{\mathcal{A}}).
\end{equation}
Clearly, the operator $\dot{\mathcal{A}}$ defined by (\ref{1.62bis}) generates
on $\dot{{\mathcal{X}}}$ a $C_{0}$-semigroup of contractions $e^{t
\dot{\mathcal{A}}}$ provided that the conditions (\ref{sma}) and (\ref{uni})
are fulfilled. Moreover, $\sigma(\dot{\mathcal{A}})$, the spectrum of
$\dot{\mathcal{A}}$, consists of isolated eigenvalues of finite algebraic
multiplicity only. In order to achieve the objective of this section, we shall
assume that the coefficient $a$ satisfies stronger conditions than
(\ref{1.3}), namely,
\begin{equation}
\left\lbrace
\begin{array}
[c]{l}%
a \in C^{1} [0,1 ];\\
\text{there exist positive constants} \; a_{0}, a_{1} \; \text{such that} \;
a(x) \ge a_{0} , \; a^{\prime}(x) \geq a_{1}, \; \text{for all} \;\; x
\in[0,\ell].
\end{array}
\right. \label{1.33}%
\end{equation}

Now, we are ready to state our result which translates the fact that the
semigroup operator $e^{t \dot{\mathcal{A}}}$ is polynomially stable in
$\dot{\mathcal{X}}$.

\begin{theo}
\label{lrkv}Assume that (\ref{sma}) and (\ref{1.33}) hold and $K$ satisfies
$|\alpha|<K<2\beta-|\alpha|$. Then, there exists $C>0$ such that for all $t>0$
we have
\[
\left\Vert e^{t\dot{\mathcal{A}}}\right\Vert _{{\mathcal{L}}(\mathcal{D}%
(\dot{\mathcal{A}}),\dot{\mathcal{X}})}\leq\frac{C}{\sqrt{t}}.
\]

\end{theo}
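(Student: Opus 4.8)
The plan is to verify the two hypotheses of Theorem \ref{lemraokv} for the operator $\dot{\mathcal A}$ with $\delta = 2$, since the announced rate $t^{-1/2}$ corresponds to $1/\delta = 1/2$. The spectral condition (\ref{1.8wkv}) is the easy part. As $\dot{\mathcal A}$ has compact resolvent, its spectrum consists of eigenvalues, so it suffices to rule out purely imaginary eigenvalues. For $\gamma = 0$ the only eigenfunction of $\mathcal A$ is $(c,0,0,0,0)$, and the constraint defining $\dot{\mathcal X}$ forces $(\beta-\alpha)c = 0$, hence $c=0$ since $\beta - \alpha > 0$ by (\ref{sma}); thus $0 \notin \sigma(\dot{\mathcal A})$. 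For $\gamma \neq 0$, an eigenvector $\Phi = (y,z,u,\xi,\eta)$ satisfies $\Re\langle \dot{\mathcal A}\Phi, \Phi\rangle = 0$, which by the dissipation identity (\ref{dis1}) together with the strict inequalities $|\alpha| < K < 2\beta - |\alpha|$ forces $\xi = 0$ and $u(1) = 0$. The transport equation then gives $u \equiv 0$, the relations $z = i\gamma y$ and $\xi = z(0)$ give $y(0) = 0$, and the platform equation yields $(a y_x)(0) = 0$, i.e.\ $y_x(0) = 0$. Hence $y$ solves $-\gamma^2 y - (a y_x)_x = 0$ with zero Cauchy data at $x=0$, so $y \equiv 0$ by uniqueness for ODEs, and therefore $i\mathbb R \subset \rho(\dot{\mathcal A})$.

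For the resolvent bound (\ref{1.9kv}) I would argue by contradiction. Suppose it fails for $\delta = 2$; then there exist $\gamma_n \in \mathbb R$ with $|\gamma_n| \to \infty$ and $\Phi_n = (y_n, z_n, u_n, \xi_n, \eta_n) \in \mathcal D(\dot{\mathcal A})$ with $\|\Phi_n\|_{\dot{\mathcal X}} = 1$ such that $G_n := (i\gamma_n I - \dot{\mathcal A})\Phi_n$ satisfies $\|G_n\|_{\dot{\mathcal X}} = o(|\gamma_n|^{-2})$. Writing $G_n = (f_n, g_n, v_n, p_n, q_n)$ and using the norm equivalence of Proposition \ref{p1}, all components are $o(|\gamma_n|^{-2})$ in the relevant norms, including the traces $f_n(0), f_n(1)$ by the trace theorem. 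Taking the real part of $\langle G_n, \Phi_n\rangle$ and using that $i\gamma_n\|\Phi_n\|^2$ is imaginary, I obtain $-\Re\langle\dot{\mathcal A}\Phi_n,\Phi_n\rangle = \Re\langle G_n,\Phi_n\rangle \to 0$; combined with (\ref{dis1}) and the strict spectral inequalities this yields $|\xi_n|, |u_n(1)| = o(|\gamma_n|^{-1})$. Solving the transport equation with $u_n(0) = \xi_n$ gives $\|u_n\|_{L^2} = o(|\gamma_n|^{-1}) \to 0$. From $z_n = i\gamma_n y_n - f_n$ and $\xi_n = z_n(0)$ I deduce $\gamma_n^2 |y_n(0)|^2 = |\xi_n + f_n(0)|^2 \to 0$, while the platform equation gives $(a y_{n,x})(0) = m(i\gamma_n \xi_n - p_n) + (\beta-\alpha)\xi_n \to 0$, so $y_{n,x}(0) \to 0$.

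The core of the proof is an observability estimate for the reduced second-order problem $-\gamma_n^2 y_n - (a y_{n,x})_x = h_n$ with $h_n := g_n + i\gamma_n f_n$ satisfying $\|h_n\|_{L^2} = o(|\gamma_n|^{-1})$. I would multiply this equation by $\overline{q\, y_{n,x}}$, integrate over $(0,1)$ and take real parts, for two choices of the weight $q$. The resulting Rellich-type identity produces the interior term $\tfrac12\int_0^1 (a q' - a' q)|y_{n,x}|^2\,dx$ together with $\tfrac{\gamma_n^2}{2}\int_0^1 q'|y_n|^2\,dx$ and boundary contributions at $x=0,1$. Choosing $q = x - 1$ annihilates all boundary terms at $x=1$ and leaves at $x=0$ only $\tfrac{\gamma_n^2}{2}|y_n(0)|^2$ and $\tfrac{a(0)}{2}|y_{n,x}(0)|^2$, both already shown to vanish; moreover the interior coefficient becomes $a + a'(1-x) \geq a_0 > 0$ precisely because $a' \geq a_1 > 0$ by (\ref{1.33}), so using that the right-hand side $\Re\int_0^1 h_n\,\overline{(x-1)y_{n,x}}\,dx$ is $o(|\gamma_n|^{-1})\cdot O(1)\to 0$, I get $\gamma_n^2\int_0^1|y_n|^2\,dx \to 0$ and $\int_0^1 a|y_{n,x}|^2\,dx \to 0$. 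Choosing then $q = x$ gives $\tfrac{\gamma_n^2}{2}|y_n(1)|^2 + \tfrac{a(1)}{2}|y_{n,x}(1)|^2 \to 0$, whence $\eta_n = i\gamma_n y_n(1) - f_n(1) \to 0$; since $z_n = i\gamma_n y_n - f_n$ also forces $\|z_n\|_{L^2}\to 0$, every term of $\|\Phi_n\|_{\dot{\mathcal X}}^2$ vanishes in the limit, contradicting $\|\Phi_n\|_{\dot{\mathcal X}} = 1$. I expect the main obstacle to be exactly this multiplier step at the undamped load end $x=1$: since the feedback acts only at $x=0$, the energy stored by the tip mass $M$ is not directly dissipated, and controlling the traces $y_n(1), y_{n,x}(1)$ forces the second multiplier together with the monotonicity $a' \ge a_1 > 0$, which is the structural reason the decay is only polynomial of order $t^{-1/2}$ rather than exponential.
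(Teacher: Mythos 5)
Your proposal is correct and follows essentially the same route as the paper: the Borichev--Tomilov frequency-domain criterion with $\delta=2$, exclusion of imaginary eigenvalues via the dissipation inequality (\ref{dis1}) and the transport/ODE structure, and a contradiction argument for the resolvent bound using the multipliers $b(x)\overline{(y_n)_x}$ with $b=x$ and $b=x-1$ together with the monotonicity $a'\geq a_1>0$. The only (harmless) variations are cosmetic: you extract $\eta_n\to 0$ from the $b=x$ identity rather than from the paper's preliminary identity $M^2|\eta_n|^2-m^2|\xi_n|^2=\circ(1)$, you observe that the $b=x-1$ identity alone already yields both $\gamma_n\Vert y_n\Vert_{L^2}\to 0$ and $\Vert (y_n)_x\Vert_{L^2}\to 0$, and you then get $z_n\to 0$ directly from $z_n=i\gamma_n y_n-f_n$ instead of the paper's extra inner-product step.
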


\begin{proof}
[Proof of theorem \ref{lrkv}]The proof of Theorem \ref{lrkv} is based on the
following lemmas.

We first look at the point spectrum.
\end{proof}

\begin{lem}
\label{condspkv} If $\gamma$ is a real number, then $i\gamma$ is not an
eigenvalue of $\dot{\mathcal{A}}$.
\end{lem}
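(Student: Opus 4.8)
The plan is to argue by contradiction: suppose $i\gamma$ with $\gamma\in\mathbb{R}$ is an eigenvalue of $\dot{\mathcal{A}}$ with eigenvector $\Phi=(y,z,u,\xi,\eta)\in\mathcal{D}(\dot{\mathcal{A}})$. First I would record that, because the semigroup is contractive, $\dot{\mathcal{A}}$ is dissipative, and in fact the computation already performed in (\ref{dis1}) gives
\[
0=\Re\langle\dot{\mathcal{A}}\Phi,\Phi\rangle_{\mathcal{X}}
\le\Bigl(-\beta+\tfrac{K+|\alpha|}{2}\Bigr)\xi^{2}
+\tfrac{|\alpha|-K}{2}\,u^{2}(1).
\]
Under the standing hypothesis $|\alpha|<K<2\beta-|\alpha|$ both bracketed coefficients are strictly negative, so this forces $\xi=0$ and $u(1)=0$. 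I would then translate these into the eigenvector equation $\dot{\mathcal{A}}\Phi=i\gamma\Phi$, component by component.

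The key steps in order are as follows. From $\dot{\mathcal{A}}\Phi=i\gamma\Phi$ one reads $z=i\gamma y$, so $\xi=z(0)=i\gamma y(0)$ and $\eta=z(1)=i\gamma y(1)$; vanishing of $\xi$ gives $i\gamma y(0)=0$. The transport component $-u_x/\tau=i\gamma u$ together with the boundary value $\xi=u(0)$ yields $u(x)=u(0)e^{-i\gamma\tau x}$, and the condition $u(1)=0$ already obtained forces $u(0)=\xi=0$ as well, hence $u\equiv0$. Substituting $z=i\gamma y$ into the first and last PDE components produces the boundary eigenvalue problem
\[
-(ay_x)_x=\gamma^{2}y,\qquad
(ay_x)(0)=\bigl(m\gamma^{2}-i\gamma\beta\bigr)y(0),\qquad
(ay_x)(1)=M\gamma^{2}y(1),
\]
supplemented by the extra information $y_x(0)=0$ (since $(ay_x)(0)$ appears in the platform component $\tfrac1m[(ay_x)(0)-\beta\xi+\alpha u(0)]=i\gamma\xi=0$ and $\xi=u(0)=0$, while $a(0)>0$). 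Thus $y$ solves the ODE $-(ay_x)_x=\gamma^2 y$ with the Cauchy data $y(0)=0$, $y_x(0)=0$ at $x=0$.

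The crux is then a uniqueness argument for this overdetermined second-order ODE: with $a\in C^1$ and $a\ge a_0>0$, the equation $-(ay_x)_x=\gamma^2y$ is a regular linear second-order ODE, so the initial conditions $y(0)=y_x(0)=0$ force $y\equiv0$ on $[0,1]$ by the Picard--Lindelöf uniqueness theorem (the case $\gamma=0$ is immediate, giving $ay_x\equiv0$ hence $y$ constant, and then $y(0)=0$ gives $y\equiv0$). Consequently $z=i\gamma y\equiv0$, $\eta=0$, and together with $u\equiv0$ and $\xi=0$ we obtain $\Phi=0$, contradicting that $\Phi$ is an eigenvector. The main obstacle is organizing the boundary data correctly so that the Cauchy conditions $y(0)=y_x(0)=0$ genuinely emerge from dissipativity: the delicate point is deducing $y_x(0)=0$, which hinges on combining the vanishing of $\xi=u(0)=0$ with the platform boundary component and the regularity $a(0)\ge a_0>0$; once that is in hand, standard ODE uniqueness closes the argument.
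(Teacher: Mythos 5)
Your argument for $\gamma\neq 0$ is correct and follows the same route as the paper: the dissipativity estimate (\ref{dis1}) combined with $\Re\langle\dot{\mathcal{A}}\Phi,\Phi\rangle_{\dot{\mathcal{X}}}=0$ forces $\xi=z(0)=0$ and $u(1)=0$, hence $u\equiv 0$, and then $y(0)=0$, $(ay_x)(0)=0$ reduce everything to the Cauchy problem $-(ay_x)_x=\gamma^2 y$, $y(0)=y_x(0)=0$, killed by ODE uniqueness. In fact you supply more detail than the paper does at this stage (the paper stops at ``we conclude that the only solution is the trivial one''), and the small sign slips in your displayed boundary condition $(ay_x)(0)=(m\gamma^2-i\gamma\beta)y(0)$ are harmless since you only ever use $\xi=0$.

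There is, however, a genuine gap in your treatment of $\gamma=0$. You write that this case gives ``$ay_x\equiv 0$ hence $y$ constant, and then $y(0)=0$ gives $y\equiv 0$,'' but your only source for $y(0)=0$ is the relation $\xi=z(0)=i\gamma y(0)=0$, which is vacuous when $\gamma=0$. This is not a cosmetic omission: the paper points out explicitly that $0$ \emph{is} an eigenvalue of the unrestricted operator $\mathcal{A}$ on $\mathcal{X}$, with eigenvector $(c,0,0,0,0)$, so no argument that ignores the restriction to $\dot{\mathcal{X}}$ can succeed at $\gamma=0$. The missing ingredient is the defining constraint of $\dot{\mathcal{X}}$: once you know $z\equiv 0$, $u\equiv 0$, $\xi=\eta=0$ and $y$ is constant, membership in $\dot{\mathcal{X}}$ gives
\begin{equation*}
\int_0^1 z\,dx+\alpha\tau\int_0^1 u\,dx+(\beta-\alpha)\,y(0)+m\xi+M\eta=(\beta-\alpha)\,y(0)=0,
\end{equation*}
and since $\beta-\alpha>0$ by (\ref{sma}) this yields $y(0)=0$ and hence $y\equiv 0$. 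With that insertion your proof is complete; without it, the $\gamma=0$ case is exactly the point where the lemma would fail for $\mathcal{A}$ in place of $\dot{\mathcal{A}}$.
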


\begin{proof}
We will show that the equation
\begin{equation}
\dot{\mathcal{A}}Z=i \gamma Z\label{eigenkv}%
\end{equation}
with $Z=(y,z,u,\xi,\eta)\in\mathcal{D}(\dot{\mathcal{A}})$ and $\gamma
\in\mathbb{R}$ has only the trivial solution. Clearly, the system
\eqref{eigenkv} writes
\begin{align}
z  & =i\gamma y\label{eigen1}\\
(a\,y_{x})_{x}  & =i\gamma z\label{eigen2}\\
-\frac{u_{x}}{\tau}  & =i\gamma u\label{eigen3}\\
\frac{1}{m}\left[  (ay_{x})(0)-\beta\xi+\alpha u(0)\right]   & =i\gamma
\xi\ .\label{eigen4}\\
-\frac{(ay_{x})(1)}{M}  & =i\gamma\eta.\label{eigenbis}%
\end{align}
\noindent Let us firstly treat the case where $\gamma=0$. It's clear that the
only solution of \eqref{eigenkv} is the trivial one.

\medskip

Suppose now that $\gamma\neq0$. By taking the inner product of (\ref{eigenkv})
with $Z$, using the inequality (\ref{dis1}) we get:
\begin{equation}
\Re\left(  <\dot{\mathcal{A}}Z,Z>_{{\dot{\mathcal{X}}}}\right)  \leq\frac
{1}{2}\,\left(  (-2\beta+|\alpha|+K)|z(0)|^{2}+(|\alpha|-K)|u(1)|^{2}\right)
(\leq0).\label{1.7kv}%
\end{equation}
Thenceforth, we obtain that $z(0)=0$ and $u(1)=0$ and hence $\xi=u(0)=0$. Lastly, we
conclude that the only solution of \eqref{eigenkv} is the trivial one.
\end{proof}

\begin{lem}
\label{lemresolventkv} The resolvent operator of $\dot{\mathcal{A}}$ obeys the
condition \eqref{1.9kv}.
\end{lem}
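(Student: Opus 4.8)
The plan is to prove the resolvent estimate (1.9) with $\delta=2$ by a contradiction argument combined with multiplier estimates. Suppose the bound fails; then there exist sequences $\gamma_n\in\mathbb{R}$ with $|\gamma_n|\to\infty$ and $Z_n=(y_n,z_n,u_n,\xi_n,\eta_n)\in\mathcal{D}(\dot{\mathcal{A}})$ with $\|Z_n\|_{\dot{\mathcal{X}}}=1$ such that $|\gamma_n|^2(i\gamma_n I-\dot{\mathcal{A}})Z_n\to 0$ in $\dot{\mathcal{X}}$. Writing $(i\gamma_n I-\dot{\mathcal{A}})Z_n=(f_n,g_n,v_n,p_n,q_n)$, this means $\gamma_n^2 f_n\to0$ in $H^1$, and $\gamma_n^2 g_n,\gamma_n^2 v_n\to0$ in $L^2$, and $\gamma_n^2 p_n,\gamma_n^2 q_n\to0$ in $\mathbb{R}$. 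The goal is to derive $\|Z_n\|_{\dot{\mathcal{X}}}\to0$, contradicting the normalization.

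The first key step is to extract dissipation information. Taking the real part of $\langle(i\gamma_n I-\dot{\mathcal{A}})Z_n,Z_n\rangle$ and using the dissipativity identity (\ref{dis1}), the assumption $|\alpha|<K<2\beta-|\alpha|$ makes both coefficients $(-2\beta+|\alpha|+K)$ and $(|\alpha|-K)$ strictly negative, so I obtain
\begin{equation}
|z_n(0)|^2+|u_n(1)|^2\leq \frac{C}{\gamma_n^2}\|Z_n\|_{\dot{\mathcal{X}}}\to0.\nonumber
\end{equation}
From the transport equation (\ref{eigen3})-type relation $u_{n,x}+i\gamma_n\tau u_n=\tau v_n$ together with the boundary value $u_n(0)=\xi_n=z_n(0)$, one solves explicitly for $u_n$ and controls $\|u_n\|_{L^2}$ in terms of $z_n(0)$ and $v_n$; this already forces the delay component $\int_0^1 u_n^2\,dx\to0$. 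Likewise $\xi_n=z_n(0)\to0$. The surviving difficulty is therefore to show that the wave part $\int_0^1(a y_{n,x}^2+z_n^2)\,dx$ and $\eta_n$ also vanish.

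The heart of the proof is a multiplier estimate on the wave equation. Using $z_n=i\gamma_n y_n-f_n$ and the first two resolvent equations, I would reduce to a second-order equation $-(a y_{n,x})_x-\gamma_n^2 y_n=$ (small), and apply the standard multiplier $m(x)y_{n,x}$ (or $a\,y_{n,x}$, exploiting the sign condition $a'\geq a_1>0$ in (\ref{1.33})) integrated over $(0,1)$. Integration by parts produces boundary terms at $x=0$ and $x=1$ plus an interior term proportional to $\int_0^1 a'y_{n,x}^2\,dx\geq a_1\int_0^1 y_{n,x}^2\,dx$; this is exactly where the strengthened hypothesis $a'(x)\geq a_1$ is essential, since it yields coercive control of $\int_0^1 y_{n,x}^2\,dx$ by the (vanishing) boundary data $z_n(0),y_{n,x}(0)$ and the right-hand side. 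The boundary terms at $x=0$ are handled by the already-established facts $z_n(0)\to0$ and $y_{n,x}(0)=(ay_{n,x})(0)/a(0)$, the latter controlled through equation (\ref{eigen4}) rewritten as $(ay_{n,x})(0)=m(i\gamma_n\xi_n)+\beta\xi_n-\alpha u_n(0)+mp_n$, all of whose terms are small; the terms at $x=1$ are absorbed using (\ref{eigenbis}) and the $\eta_n$-equation. Combining these gives $\int_0^1 y_{n,x}^2\,dx\to0$, then $\int_0^1 z_n^2\,dx\to0$ and $\eta_n\to0$ follow, yielding $\|Z_n\|_{\dot{\mathcal{X}}}\to0$, the sought contradiction.

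The main obstacle I anticipate is bookkeeping the powers of $\gamma_n$ correctly so that the multiplier identity closes: the multiplier $a y_{n,x}$ generates a term of order $\gamma_n^2\int y_n z_n$ that must be matched against the coercive interior term and shown to be lower order or to cancel, and one must verify that the choice $\delta=2$ (rather than $\delta=1$) is the sharp exponent forced by the single boundary dissipation acting only through the velocity at $x=0$. Establishing condition (\ref{1.8wkv}), namely $i\mathbb{R}\subset\rho(\dot{\mathcal{A}})$, is comparatively routine: the absence of eigenvalues on the imaginary axis is already Lemma \ref{condspkv}, surjectivity of $i\gamma I-\dot{\mathcal{A}}$ follows from the compact-resolvent structure established in Theorem \ref{(t1)}, and together these give $i\mathbb{R}\subset\rho(\dot{\mathcal{A}})$.
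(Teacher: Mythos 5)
Your skeleton coincides with the paper's: a contradiction argument at the scale $\gamma_n^2(i\gamma_n I-\dot{\mathcal{A}})Z_n\to 0$ (so $\delta=2$), the dissipation inequality \eqref{dis1} to obtain $\gamma_n z_n(0)\to0$, $\gamma_n u_n(1)\to0$, hence $\xi_n\to0$ and, via the explicit solution of the transport equation, $u_n\to0$ in $L^2(0,1)$; then a multiplier of the form $b(x)\overline{(y_n)_x}$ applied to $-\gamma_n^2y_n-(a(y_n)_x)_x=i\gamma_nf_n+g_n$, with \eqref{1.33} supplying coercivity. However, the step you explicitly leave open is the crux, and your proposed way around it would not work. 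For a single multiplier $b$, taking real parts in \eqref{step1} produces the interior terms $\tfrac12\int_0^1(ab'-a'b)|(y_n)_x|^2dx+\tfrac12\gamma_n^2\int_0^1 b'|y_n|^2dx$ plus boundary terms; there is no single natural choice of $b$ for which the gradient term is coercive \emph{and} the $O(\gamma_n^2)$ term is harmless \emph{and} the boundary term at $x=1$ (which carries the uncontrolled quantity $|(y_n)_x(1)|^2$) has a good sign. In particular your suggestion $b=a$ gives $ab'-a'b\equiv0$, so it yields no control of $\int_0^1|(y_n)_x|^2dx$ at all, and the term ``proportional to $\int_0^1 a'|(y_n)_x|^2dx$'' you invoke does not arise from one multiplier. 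The paper's device is to run \eqref{step1} twice, with $b(x)=x$ and $b(x)=x-1$, and subtract: since $b'\equiv1$ in both cases the terms $\gamma_n^2\int_0^1|y_n|^2dx$ cancel exactly, and the difference is $\int_0^1 a'|(y_n)_x|^2dx+a(1)|(y_n)_x(1)|^2=\circ(1)$, a sum of nonnegative quantities (this is \eqref{eqcle1}); this simultaneously gives $y_n\to0$ in $H^1$ and $(y_n)_x(1)\to0$, the latter being needed again in the third step to show $z_n\to0$ in $L^2$, because $(y_n)_x(1)=O(\gamma_n\eta_n)$ is not a priori bounded.

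A second, smaller gap concerns the boundary terms in \eqref{step1}: to discard $\gamma_n^2|y_n(0)|^2$, $\gamma_n^2|y_n(1)|^2$ and $i\gamma_nf_n(j)\overline{y_n}(j)$ one needs $\gamma_ny_n(0)\to0$, $\gamma_ny_n(1)\to0$ (hence $\eta_n=z_n(1)\to0$) and $f_n(0),f_n(1)\to0$ \emph{before} the multiplier step, whereas your plan obtains $\eta_n\to0$ only afterwards and never says where $y_n(0)\to0$ comes from. The paper derives these facts in its first step by exploiting the constraint defining $\dot{\mathcal{X}}$, applied both to $Z_n$ and to the datum $(f_n,g_n,v_n,p_n,q_n)$, together with an integration of \eqref{1.13bkv} against $a\,\overline{(y_n)_x}$ giving $M^2|\eta_n|^2-m^2|\xi_n|^2=\circ(1)$. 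This is the only place where the restriction to the quotient space $\dot{\mathcal{X}}$ is used quantitatively, so it cannot be skipped; your plan should make it explicit.
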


\begin{proof}
Suppose that condition \eqref{1.9kv} is false. By the Banach-Steinhaus Theorem
(see \cite{br}), there exist a sequence of real numbers $\gamma_{n}%
\rightarrow+\infty$ and a sequence of vectors \newline$Z_{n}=(y_{n}%
,z_{n},u_{n},\xi_{n},\eta_{n})\in\mathcal{D}(\dot{\mathcal{A}})$ with $\Vert
Z_{n}\Vert_{\dot{\mathcal{X}}}=1$ such that
\begin{equation}
\Vert\gamma_{n}^{2}\,(i\gamma_{n}I-\dot{\mathcal{A}})Z_{n}\Vert_{\dot
{\mathcal{X}}}\rightarrow0\;\;\;\;\mbox{as}\;\;\;n\rightarrow\infty
,\label{1.12kv}%
\end{equation}
that is, {as} $n\rightarrow\infty$, we have:
\begin{equation}
\gamma_{n}^{2}\left(  i\gamma_{n}y_{n}-z_{n}\right)  \equiv\gamma_{n}%
^{2}\,f_{n}\rightarrow0\;\;\;\mbox{in}\;\;H^{1}(0,1),\label{1.13kv}%
\end{equation}%
\begin{equation}
\gamma_{n}^{2}\left(  i\gamma_{n}z_{n}-(a(y_{n})_{x})_{x}\right)  \equiv
\gamma_{n}^{2}\,g_{n}\rightarrow0\;\;\;\mbox{in}\;\;L^{2}(0,1),\label{1.13bkv}%
\end{equation}%
\begin{equation}
\gamma_{n}^{2}\left(  i\gamma_{n}u_{n}+\frac{(u_{n})_{x}}{\tau}\right)
\equiv\gamma_{n}^{2}\,v_{n}\rightarrow0\;\;\;\mbox{in}\;\;L^{2}%
(0,1),\label{1.14bkv}%
\end{equation}%
\begin{equation}
\gamma_{n}^{2}\left(  i\gamma_{n}\xi_{n}-\frac{1}{m}\left[  (a(y_{n}%
)_{x})(0)-\beta\xi_{n}+\alpha u_{n}(1)\right]  \right)  \equiv\gamma_{n}%
^{2}\,p_{n}\rightarrow0,\label{1.14kv}%
\end{equation}%
\begin{equation}
\gamma_{n}^{2}\left(  i\gamma_{n}\eta_{n}+\frac{(a(y_{n})_{x})(1)}{M}\right)
\equiv\gamma_{n}^{2}\,q_{n}\rightarrow0.\label{lasteq}%
\end{equation}

Our goal is to derive from \eqref{1.12kv} that $\Vert Z_{n}\Vert
_{\dot{\mathcal{X}}}$ converges to zero, thus there is a contradiction. The
proof is divided into three steps\medskip

\noindent\textbf{First step.}\medskip

We first notice that we have
\begin{equation}
||\gamma_{n}^{2}(i\gamma_{n}I-\dot{\mathcal{A}})Z_{n}||_{\dot{\mathcal{X}}%
}\geq\left\vert \Re\left(  \langle\gamma_{n}^{2}\,(i\gamma_{n}I-\dot
{\mathcal{A}})Z_{n},Z_{n}\rangle_{\dot{\mathcal{X}}}\right)  \right\vert
\ .\label{1.15kv}%
\end{equation}
Amalgamating  \eqref{1.15kv} with \eqref{1.7kv}-\eqref{1.13kv}, it follows that
\[
\gamma_{n}\,z_{n}(0)\rightarrow0,
\]
and
\[
\gamma_{n}\,u_{n}(1)\rightarrow0.
\]
Moreover, since $Z_{n}\in{\mathcal{D}}(\dot{\mathcal{A}})$, we deduce that
$\xi_{n}=\,u_{n}(0)=z_{n}(0)$. Thereby
\begin{equation}
\gamma_{n}\,\xi_{n}=\gamma_{n}\,u_{n}(0)\rightarrow0\;\hbox{and}\;\xi
_{n}\rightarrow0.\label{unkv}%
\end{equation}
Whereupon, (\ref{1.14kv}) gives
\begin{equation}
(y_{n})_{x}(0)\rightarrow0.\label{grad0}%
\end{equation}
Solving (\ref{1.14bkv}), we have the following identity
\[
u_{n}(x)=u_{n}(0)\,e^{-i\tau\gamma_{n}x}+\tau\,\int_{0}^{x}e^{-i\tau\gamma
_{n}(x-s)}v_{n}(s)\,ds\ ,
\]
which, together with (\ref{unkv}), implies that
\begin{equation}
u_{n}\rightarrow0\;\;\mbox{in}\;\;L^{2}(0,1).\label{znkv}%
\end{equation}
We have according to (\ref{1.12kv})
\begin{equation}
y_{n}=\frac{1}{i\gamma_{n}}(z_{n}+f_{n})\rightarrow0\;\hbox{in}\;L^{2}%
(0,1).\label{eqcle0}%
\end{equation}
Invoking \eqref{1.13kv}, \eqref{1.14kv} and \eqref{lasteq}, we have
\[
\int_{0}^{1}z_{n}(x)dx-\frac{1}{i\gamma_{n}}\int_{0}^{1}g_{n}(x)dx=\frac
{p_{n}}{i\gamma_{n}}-m\xi_{n}-M\eta_{n}+\frac{q_{n}}{i\gamma_{n}}%
+\circ(1)=-M\eta_{n}+\circ(1).
\]
Then, since $Z_{n}\in\dot{\mathcal{X}}$, we obtain that
\begin{equation}
y_{n}(0)\rightarrow0.\label{z1kv}%
\end{equation}
Integrating (\ref{1.13bkv}) we get
\[
\int_{0}^{1}i\gamma_{n}\,z_{n}(x)\,a\,\overline{(y_{n})_{x}}(x)\,dx-\int
_{0}^{1}(a(y_{n})_{x})_{x}(x)a(x)\overline{(y_{n})_{x}}(x)\,dx=\int_{0}%
^{1}g_{n}(x)a(x)\,\overline{(y_{n})_{x}}(x)\,dx
\]
and hence
\[
\frac{1}{\gamma_{n}^{2}}\left(  \left\vert (a(1)(y_{n})_{x})(1)\right\vert
^{2}-\left\vert (a(0)(y_{n})_{x})(0)\right\vert ^{2}\right)  =\circ(1).
\]
Therefore, using (\ref{1.14kv}) and (\ref{lasteq}), we have
\begin{equation}
M^{2}|\eta_{n}|^{2}-m^{2}|\xi_{n}|^{2}=\circ(1)\Rightarrow\eta_{n}%
\rightarrow0.\label{cveta}%
\end{equation}
Since $(f_{n},g_{n},v_{n},p_{n},q_{n})\in\dot{\mathcal{X}}$, we obtain
\[
\int_{0}^{1}g_{n}(x)\,dx+\alpha\tau\,\int_{0}^{1}v_{n}(x)\,dx+(\beta
-\alpha)\,f_{n}(0)+m\,\xi_{n}+M\,\eta_{n}=0,\forall\,n\in\mathbb{N}.
\]
Therefore
\begin{equation}
f_{n}(0)\rightarrow0\;\hbox{and}\;f_{n}(1)=\int_{0}^{1}(f_{n})_{x}%
(x)\,dx+f_{n}(0)\rightarrow0.\label{cvf}%
\end{equation}
This also implies, thanks to (\ref{1.13kv}), that
\[
i\gamma_{n}y_{n}(1)=z_{n}(1)+f_{n}(1)\rightarrow0,\,i\gamma_{n}y_{n}%
(0)=z_{n}(0)+f_{n}(0)\rightarrow0.
\]

\noindent\textbf{Second step.}\medskip

We express now $z_{n}$ in terms of $y_{n}$ from equation (\ref{1.13kv}) and
substitute it into (\ref{1.13bkv}) to get
\begin{equation}
-\gamma_{n}^{2}y_{n}-(a(y_{n})_{x})_{x}=i\gamma_{n}f_{n}+g_{n}.\label{cvzy}%
\end{equation}
Next, we take the inner product of (\ref{cvzy}) with $b(y_{n})_{x}$ in
$L^{2}(0,1)$, where $b\in C^{1}([0,1])$. We obtain
\begin{gather}
\int_{0}^{1}\left(  -\gamma_{n}^{2} y_{n}(x)-(a(y_{n})_{x})_{x}(x)\right)
\,b(x)\overline{(y_{n})_{x}}(x)\,dx=\int_{0}^{1}\left(  i\gamma_{n}%
f_{n}(x)+g_{n}(x)\right)  \,b(x)\overline{(y_{n})_{x}}(x)\,dx = \nonumber\\
-\int_{0}^{1}i\gamma_{n}(f_{n})_{x}(x)b(x)\overline{y}_{n}(x)\,dx+\int
_{0}^{1}g_{n}(x)b(x)(\overline{y_{n}})_{x}(x)\,dx+i\gamma_{n}f_{n}%
(1)b(1)\overline{y}_{n}(1)-i\gamma_{n}f_{n}(0)b(0)\overline{y}_{n}%
(0).\label{step1}%
\end{gather}
It is clear that the right-hand side of (\ref{step1}) converges to zero since
$f_{n},\,g_{n},\,f_{n}(0),\,f_{n}(1),$ $\gamma_{n}y_{n}(0)$ and $\gamma
_{n}y_{n}(1)$ converge to zero in $H^{1}(0,1),L^{2}(0,1)$ and $\mathbb{C}$,
respectively.\medskip

On the other hand, a straightforward calculation yields
\[
\Re\left\{  \int_{0}^{1}-\gamma_{n}^{2}\,y_{n}b(x)\overline{(y_{n})}
_{x}(x)\,dx\right\}  =\frac{1}{2}\,\int_{0}^{1}\gamma_{n}^{2}\,b^{\prime
}(x)|y_n(x)|^{2}\,dx - \frac{1}{2}\,\left( -\gamma_{n}^{2}b(1)|y_{n}
(1)|^{2}+\gamma_{n}^{2}b(0)\,|y_n(0)|^{2}\right)
\]
\[
=\frac{1}{2} \, \int_{0}^{1}\gamma_{n}^{2}\,b^{\prime}(x)|y_{n}(x)|^{2}%
\,dx + \circ(1)
\]
and
\[
\Re\left\{  -\int_{0}^{1}(a(y_{n})_{x})_{x}(x)\,b(x)\overline{(y_{n})}%
_{x}(x)\,dx\right\}  =\frac{1}{2}\,\int_{0}^{1}(ab^{\prime}-a^{\prime
}b)|(y_{n})_{x}|^{2}\,dx
\]%
\[
-\frac{1}{2}\left(  a(1)b(1)\,|((y_{n})_{x}(1)|^{2}-a(0)b(0)\,|(y_{n}%
)_{x}(0)|^{2}\right)
\]%
\[
=\frac{1}{2}\,\int_{0}^{1}(ab^{\prime}-a^{\prime}b)|(y_{n})_{x}|^{2}%
\,dx-\frac{1}{2}\,a(1)b(1)\,|((y_{n})_{x}(1)|^{2}+\circ(1).
\]
This leads to
\begin{equation}
\int_{0}^{1}(ab^{\prime}-a^{\prime}b)|(y_{n})_{x}|^{2}\,dx+\int_{0}%
^{1} \gamma_{n}^{2} b^{\prime}(x)\,|y_{n}(x)|^{2}\,dx-a(1)b(1)|(y_{n})_{x}(1)|^{2}%
=\circ(1).
\end{equation}
In particular, by taking $b(x)=x,$ for $x\in\lbrack0,1]$, we get
\begin{equation}
\int_{0}^{1}(a-xa^{\prime})|(y_{n})_{x}|^{2}\,dx+\int_{0}^{1} \gamma_{n}^{2}\,|y_{n}%
(x)|^{2}\,dx-a(1)|(y_{n})_{x}(1)|^{2}=\circ(1),\label{eqcle00}%
\end{equation}
while for $b(x)=x-1,\forall\,x\in\lbrack0,1]$, we have
\begin{equation}
\int_{0}^{1}(a-(x-1)a^{\prime})|(y_{n})_{x}|^{2}\,dx+\int_{0}^{1}%
\gamma_{n}^{2} \,|y_{n}(x)|^{2}\,dx=\circ(1).\label{eqcle11}%
\end{equation}
Combining (\ref{eqcle00})-(\ref{eqcle11}), it follows that
\begin{equation}
\int_{0}^{1}a^{\prime}\,|(y_{n})_{x}|^{2}\,dx+a(1)|(y_{n})_{x}(1)|^{2}%
=\circ(1).\label{eqcle1}%
\end{equation}
Thus according to (\ref{1.33}), we obtain
\begin{equation}
(y_{n})_{x}(1)=\circ(1),\label{eqcle2}%
\end{equation}
which, together with (\ref{eqcle0}) and (\ref{eqcle1}), yields
\begin{equation}
y_{n}\rightarrow0\;\,\hbox{in}\;H^{1}(0,1).\label{eqcle3}%
\end{equation}

\noindent\textbf{Third step.}\medskip

Taking the inner product of \eqref{1.13bkv} with $\frac{z_{n}}{\gamma_{n}^{2}%
}$ in $L^{2}(0,1)$, we have
\[
i\gamma_{n}\int_{0}^{1}|z_{n}(x)|^{2}\,dx+\int_{0}^{1}(a(y_{n}))_{x}%
(x)\,\overline{i\gamma_{n}\,(y_{n})_{x}(x)-(f_{n})_{x}(x)}\,dx
\]%
\[
-a(1)(y_{n})_{x}(1)\,\overline{i\gamma_{n}y_{n}(1)-f_{n}(1)}+a(0)(y_{n}%
)_{x}(0)\,\overline{i\gamma_{n}y_{n}(0)-f_{n}(0)}%
\]%
\[
=\int_{0}^{1}g_{n}(x)\,\overline{i\gamma_{n}y_{n}(x)-f_{n}(x)}\,dx=\circ(1).
\]
Hence
\[
\int_{0}^{1}|z_{n}(x)|^{2}\,dx-\int_{0}^{1}a(x)\,|(y_{n})_{x}(x)|^{2}%
\,dx=\circ(1),
\]
which together with (\ref{eqcle3}) leads to 
\begin{equation}
z_{n}\rightarrow0\;\hbox{in}\;L^{2}(0,1).\label{cvz}%
\end{equation}
Lastly, the identities \eqref{unkv}, \eqref{znkv}, \eqref{cveta},
\eqref{eqcle3} and \eqref{cvz} clearly contradicts the fact that $\left\Vert
Z_{n}\right\Vert _{\dot{\mathcal{X}}}=1,\;\forall\ n\in\mathbb{N}.$\medskip

Thereby, the two assumptions of Theorem \ref{lemraokv} are proved and the
proof of Theorem \ref{lrkv} is thus completed.
\end{proof}

\begin{rem}
Combining Theorem \ref{t2} and Theorem \ref{lrkv}, one can claim that the
solutions of the closed-loop system (\ref{1.6}) polynomially tend in
${\mathcal{X}}$ to $(\Omega,0,0,0,0)$ as $t\longrightarrow+\infty$, where
$\Omega$ is given by (\ref{cste}). \label{pol}
\end{rem}

\section{Conclusions and discussions}

\label{sect6} To recapitulate, this work dealt with the analysis of overhead
system under the presence of a constant time-delay in the boundary velocity
control. Assuming that the feedback gain of the delayed term is small, it has
been shown that the system is well-posed whose proof is based on the
introduction of a suitable energy-norm. Additionally, it has been proved that
the solutions of the system asymptotically converge to an equilibrium state
which is explicitly given and depends on the initial conditions. The proof of
this result utilized the well-known LaSalle principle. More importantly, the
polynomial convergence of solutions has been obtained.

\medskip

We point out that there are many problems which could be treated. For instance,
it is quite natural to wonder whether the results obtained in this article
could be extended to the case where the control is nonlinear. Moreover, if the
delay occurring in the boundary control is time-dependent, then does the
convergence result still hold? This will be the focus of our attention in
future works.


\begin{thebibliography}     {99}                                                                                               %
\bibitem{KZD} Z. Abbas, K. Ammari and D. Mercier, Remarks on stabilization of
second order evolution equations by unbounded dynamic feedbacks, \emph{Journal
of Evolution Equations,} \textbf{16} 

\bibitem{anm} E.M. Abdel-Rahman, A.H. Nayfeh and Z.N. Masoud, Dynamics and control of cranes: A review, \emph{
J. Vibration and Control}, \textbf{9} (2003), 863--908.

\bibitem {ad}R. A. Adams, {Sobolev Spaces}, Academic Press, New York-London, 1975.

\bibitem {KLSM}E. M. Ait Ben Hassi, S. Boulite and L. Maniar, Feedback
stabilization of a class of evolution equations with delay, \emph{J. Evol.
Eq.}, \textbf{1} (2009), 103--121.

\bibitem {KS}K. Ammari and S. Gerbi, Interior feedback stabilization of wave
equations with dynamic boundary delay, to appear in \emph{Zeitschrift f\"ur
Analysis und Ihre Anwendungen}.

\bibitem {KSe}K. Ammari and S. Nicaise, Stabilization of elastic systems by
collocated feedback, \emph{Lecture Notes in Mathematics}, 2124, Springer,
Cham, 2015.

\bibitem {ANBCR}B. d'Andr\'ea-Novel, F. Boustany and B. Rao, Feedback
stabilization of a hybrid PDE-ODE system: Application to an overhead crane,
\emph{Math. Control. Signals Systems.,} \textbf{7} (1994), 1--22.

\bibitem {AC}B. d'Andr\'ea-Novel and J.M. Coron, Exponential stabilization of
an overhead crane with flexible cable via a back-stepping approach,
\emph{Automatica,} \textbf{36} (2000) 587--593.

\bibitem {tomilov}A. Borichev and Y. Tomilov, Optimal polynomial decay of
functions and operator semigroups, \emph{Math. Ann.,} \textbf{347} (2010), 455--478.

\bibitem {br}H. Brezis, {Functional Analysis, Sobolev Spaces and Partial
Differential Equations}, Universitex, Springer, 2011.

\bibitem {bc01}B. Chentouf, C. Z. Xu and G. Sallet, {On the stabilization of a
vibrating equation}, \emph{Nonlinear Analysis: Theory, Methods Applications,}
\textbf{39} (2000), 537--558.

\bibitem {Cher}M. Cherkaoui and F. Conrad, Stabilisation d'un bras fl\'exible
en torsion, \emph{Research report INRIA,} \textbf{1805} (1992).

\bibitem {mif2}F. Conrad and A. Mifdal, Strong stability of a model of an
overhead crane, \emph{Control Cybernet.,} \textbf{27} (1998) 363--394.

\bibitem {cos}F. Conrad, G. O'Dowd and F. Z. Saouri, Asymptotic behavior for a
model of flexible cable with tip masses, \emph{Asymptot. Anal.,} \textbf{30}
(2002) 313--330.

\bibitem {CR}F. Conrad and B. Rao, Decay of solutions of the wave equation in
a star-shaped domain with nonlinear boundary feedback, \emph{Asymptotic
Analysis,} \textbf{7} (1993) 159--177.

\bibitem {da}C.M. Dafermos, Asymptotic stability in viscoelasticity,
\emph{Arch. Rational Mech. Anal.,} \textbf{37} (1970) 297--308.

\bibitem {d1}R. Datko, Not all feedback stabilized hyperbolic systems are
robust with respect to small time delays in their feedbacks, \emph{SIAM J.
Control Optim.,} \textbf{26} (1988), 697--713.

\bibitem {d2}R. Datko, Two examples of ill-posedness with respect to time
delays revisited, \emph{IEEE Trans. Autom. Control.,} \textbf{42} (1997), 511--515.

\bibitem {d3}R. Datko, J. Lagnese and M.P. Polis, An example on the effect of
time delays in boundary feedback stabilization of wave equations, \emph{SIAM
J. Control Optim.,} \textbf{24} (198), 152--156.

\bibitem {EN1}K.J. Engel and R. Nagel, \emph{One-Parameter Semigroups for
Linear Evolution Equations,} Graduate Texts in Mathematics No. 194,
Springer-Verlag, New York, 2000.

\bibitem {EN2}K.J. Engel and R. Nagel, \emph{A Short Course on Operator
Semigroups,} Universitext, Springer, New York, 2006.

\bibitem {HA}A. Haraux, {\ Syst\`ems Dynamique Dissipatifs et Applications },
Collection RMA (17). Paris: Masson, 1991.

\bibitem {ka}T. Kato, {\ Perturbation theory of linear Operators},
Springer-Verlag, New York, 1976.

\bibitem {mif1}A. Mifdal, {Stabilisation uniforme d'un syst\`eme hybride},
\emph{C. R. Acad. Sci. Paris. S\'erie I.,} \textbf{324} (1997) 37--42.

\bibitem {Pa:83}A. Pazy, {\ Semigroups of Linear Operators and Applications to
Partial Differential Equations}, Springer-Verlag, New York, 1983.

\bibitem {pruss}J. Pr{\"u}ss, \emph{On the spectrum of $C_{0}$}-semigroups,
\emph{Trans. Amer. Math. Soc.,} \textbf{284} (1984), 847--857.

\bibitem {Ra}B. Rao, Decay estimate of solution for hybrid system of flexible
structures, \emph{Euro. J. Appl. Math.,} \textbf{4} (1993), 303--319.

\bibitem {sha}Y.F. Shang, G.Q. Xu and Y.L. Chen, Stability analysis of
Euler-Bernoulli beam with input delay in the boundary control, \emph{Asian J.
Control.,} \textbf{14} (2012), 186--196.

\bibitem {gx}G.Q. Xu, Stabilization of string system with linear boundary
feedback, {Nonlinear Analysis: Hybrid Systems,} \textbf{1} (2007), 383--397.
\end{thebibliography}
\end{document}